\renewcommand*{\backref}[1]{}
\renewcommand*{\backrefalt}[4]{%
	\ifcase #1%
	\or [Page~#2.]%
	\else [Pages~#2.]%
	\fi%
}
\newcommand{\hh}{{\hspace{.3mm}}}
\newcommand{\pdot}{{\boldsymbol{\cdot}}}
\newcommand{\bm}[1]{\mbox{\boldmath $ #1 $}}
\newtheorem{theorem}{Theorem}[section]
\newtheorem{lemma}[theorem]{Lemma}
\newtheorem{proposition}[theorem]{Proposition}
\theoremstyle{definition}
\theoremstyle{remark}
\newtheorem{remark}[theorem]{Remark}
\newtheorem{problem}[theorem]{Problem}
\let\ccdot\cdot
\def\cdot{\hbox to 2.5pt{\hss$\ccdot$\hss}}
\newcommand{\Ad}{\operatorname{Ad}}
\newcommand{\End}{\operatorname{End}}
\def\sideremark#1{\ifvmode\leavevmode\fi\vadjust{\vbox to0pt{\vss
			\hbox to 0pt{\hskip\hsize\hskip1em
				\vbox{\hsize3cm\tiny\raggedright\pretolerance10000
					\noindent #1\hfill}\hss}\vbox to8pt{\vfil}\vss}}}%
\newcommand\sss{\scriptscriptstyle}
\newcommand{\qi}{{\bm i}}
\newcommand{\qj}{{\bm j}}
\newcommand{\qk}{{\bm k}}
\newcommand{\ZZ}{\mathbb{Z}}
\newcommand{\RR}{\mathbb{R}}
\newcommand{\CC}{\mathbb{C}}
\newcommand{\HH}{\mathbb{H}}
\newcommand{\GL}{\operatorname{GL}}
\newcommand{\Sp}{\operatorname{Sp}}
\newcommand{\Uni}{\operatorname{U}}
\newcommand{\Mp}{\operatorname{Mp}}
\newcommand{\Mpc}{\operatorname{Mp^c}}
\newcommand{\Id}{\operatorname{Id}}
\newcommand{\Span}{\operatorname{Span}}
\newcommand{\Stab}{\operatorname{Stab}}
\newcommand{\frg}{\mathfrak{g}}
\newcommand{\frh}{\mathfrak{h}}
\newcommand{\frm}{\mathfrak{m}}
\newcommand{\uni}{\mathfrak{u}}
\renewcommand{\sp}{\mathfrak{sp}}
\newcommand{\heis}{\mathfrak{heis}}
\newcommand{\gl}{\mathfrak{gl}}
\def\Re{\mathrm{Re}}
\numberwithin{equation}{section}
\def\@tocline#1#2#3#4#5#6#7{\relax
	\ifnum #1>\c@tocdepth 
	\else
	\par \addpenalty\@secpenalty\addvspace{#2}%
	\begingroup \hyphenpenalty\@M
	\@ifempty{#4}{%
		\@tempdima\csname r@tocindent\number#1\endcsname\relax
	}{%
		\@tempdima#4\relax
	}%
	\parindent\z@ \leftskip#3\relax \advance\leftskip\@tempdima\relax
	\rightskip\@pnumwidth plus4em \parfillskip-\@pnumwidth
	#5\leavevmode\hskip-\@tempdima
	\ifcase #1
	\or\or \hskip 1em \or \hskip 2em \else \hskip 3em \fi%
	#6\nobreak\relax
	\dotfill\hbox to\@pnumwidth{\@tocpagenum{#7}}\par
	\nobreak
	\endgroup
	\fi}
\begin{document}
	\date{\today}
	\title{
	  	Seven Sphere Quantization
	}
	\author{Subhobrata Chatterjee${}^\flat$
	}
	\address{${}^{\flat}$Center for Quantum Mathematics and Physics (QMAP) and Department of Physics and Astronomy, University of California, Davis, CA 95616, USA}
	\email{sbhchatterjee@ucdavis.edu}
	\author{Can G\"ormez${}^\sharp$
	}
	\author{Andrew Waldron${}^\sharp$ 
	}
	\address{${}^{\sharp}$Center for Quantum Mathematics and Physics (QMAP) and Department of Mathematics, University of California, Davis, CA 95616, USA}
	\email{cgormez@ucdavis.edu} \email{wally@math.ucdavis.edu}

	\vspace{10pt}
	
	\renewcommand{\arraystretch}{1}

	\begin{abstract} 
	Co-oriented contact manifolds quite generally describe classical dynamical systems. Quantization is achieved by suitably associating a Schr\"odinger equation to every path in the contact manifold.
	We quantize the standard contact seven sphere by treating it as a homogeneous space of the quaternionic unitary group in order to construct a contact analog of Fedosov's formal connection on symplectic spinor bundles. We show that requiring convergence of the formal connection naturally filters the symplectic spinor bundle and yields an exact flat connection on each corresponding subbundle. A key ingredient is a generalization of the Holstein--Primakoff mechanism to the quaternionic unitary group. The passage from formal to \textit{bona fide} quantization determines unitary irreducible representations of the quaternionic unitary group, whose dimensions tend to infinity as the formal deformation parameter approaches its classical limit. This appearance of finite-dimensional representations is not surprising since the contact seven sphere is closed and physically describes generalized positions, momenta and time variables. 
	
		\vspace{2cm}
		\noindent
		{\sf \tiny Keywords: Quantization, contact geometry, deformation quantization, quaternionic homogeneous spaces, Holstein--Primakoff, seven sphere.}
	\end{abstract}
	
	\maketitle
	
	\pagestyle{myheadings} \markboth{Chatterjee, G\"ormez, \& Waldron}{Seven Sphere Quantization}
	
	\tableofcontents
	\newpage
	
\section{Introduction}
Quantization is a bridge between classical and quantum dynamics. There are various formulations of the quantization problem for a classical dynamical system. Two main approaches are geometric and deformation quantization, both of which rely on underlying geometric data. Geometric quantization typically begins with a symplectic manifold over which one constructs a complex line bundle with a connection~\cite{souriau1966,Kostant1970,souriau1997structure,woodhouse1992,Kirillov_2001}. Deformation quantization, on the other hand, starts with a Poisson structure~\cite{BFFLS,BFFLS1,BFFLS2,dewildelecomte1983,fedosovdeform,kontsevich2003,Berezin_1975}. In both cases, one attempts to promote the commutative algebra of classical observables to a noncommutative algebra of quantum observables. In this article, we focus on a geometric, dynamics-first approach to quantization.

We begin with an \textit{odd symplectic manifold} $(Z,\omega)$, where $\omega$ is a maximal rank closed two-form~\cite{Corradini:2020vqa} (see also \cite{He,Lin2013}). Note that every strict contact manifold (as defined below) is odd symplectic. Odd symplectic manifolds are a particularly appealing~\cite{Vaisman1983,Gotay_Śniatycki_1981,Günther_1980} classical starting point because they put generalized momenta, positions and time on the same footing, in much the same way as is done for space and time in general relativity. One can now directly quantize classical dynamics by constructing a connection encoding a generally covariant Schr\"odinger equation~\cite{Herczeg:2017xxy,Herczeg:2018hup}, or in other words, quantum evolutions for all possible classical clocks. For general geometries, quantization remains a difficult problem. We shall therefore focus on the highly symmetric case of a seven sphere equipped with its standard contact structure and believe that our results ought generalize to contact homogeneous spaces~\cite{alekseevsky,alekseevskysymmetric}.

An odd dimensional manifold $Z^{2n+1}$ equipped with a {\it contact form} $\alpha$, meaning a one-form that defines a nowhere vanishing volume form $\alpha\wedge (d \alpha)^{\wedge n}$, is precisely tailored to describe both classical and quantum physics. Firstly, the action functional of paths $\gamma$ in~$Z$, 
$$
S[\gamma]=\int_\gamma \alpha \, ,
$$
is extremized by compactly supported variations of $\gamma$ when
$$
\dot \gamma \in \ker d\alpha \, .
$$
Note that, by virtue of the volume non-degeneracy condition,  the kernel of the {\it Levi two-form} $d\alpha$, viewed as a map $\Gamma(TZ) \to \Gamma(T^*Z)$, defines a line subbundle of $TZ$. The section $R$ thereof, determined by the normalization $\alpha(R)=1$, is termed the {\it Reeb vector field}. The data $(Z^{2n+1},\alpha)$ is called a {\it strict contact manifold}~\cite{Geiges2008}, and provides a time covariant generalization of Hamiltonian dynamics on symplectic manifolds~\cite{LibermannMarle1987,CiagliaCruzMarmo2018,Herczeg:2017xxy,Bravetti2017,Bravetti2017v2,leonlainz2019,leonlainz2021}. 

Secondly, in addition to classical dynamics, the contact form $\alpha$ canonically determines a bundle of Hilbert spaces 
over $Z$~\cite{chwdynamics} as follows: The kernel of $\alpha$, viewed as a map~$\Gamma(TZ)\to C^\infty(Z)$, yields a maximally non-integrable hyperplane distribution~$\xi$ in~$TZ$. The data $(Z,\xi)$ is then a \textit{contact manifold}.
The distribution~$\xi$ is a symplectic vector bundle over $Z$ with bilinear form determined by the Levi two-form. In turn, the bundle of symplectic frames on~$\xi$ is a principal~$\Sp(2n,{\mathbb R})$-bundle. In quantum mechanics, one is interested in the unitary representation of the metaplectic group $\operatorname{Mp}(2n,\RR)$, typically realized by acting on the Hilbert space $L^2({\mathbb R}^n)$ with its standard hermitean inner product. Locally, the principal $\Sp(2n,\RR)$-bundle of symplectic frames lifts to an $\Mp(2n,\RR)$-bundle. In general, such lifts do not exist globally. However, for a symplectic vector bundle, we may always lift its symplectic frame bundle to a principal~$\Mpc(2n,\RR)$-bundle, where the structure group is the metaplectic-c group~$\Mpc(2n,\RR)$~\cite{robinson1989metaplectic}. Hence, we consider the associated vector bundle ${\mathcal H} Z$ determined by a unitary representation of $\Mpc(2n,\RR)$. The bundle ${\mathcal H}Z$ is called the symplectic spinor bundle and
inherits a hermitean bundle inner product $\langle\pdot,\pdot\rangle$
from that on ${\mathcal H}$~\cite{symplecticdirac,symplecticdirac2}; see Section~\ref{sec:quantization} for details.
We will also denote by ${\mathcal H} Z$ the~$\Uni({\mathcal H})$ vector bundle induced by considering orthonormal frames for the inner product $\langle\pdot,\pdot\rangle$ and then again considering the associated vector bundle for the $\Uni({\mathcal H})$ representation~${\mathcal H}$.
\bigskip

The quantization problem for a strict contact manifold $(Z,\alpha)$ is stated, in its strongest form, as follows. 
\begin{problem}\label{wehaveproblems}
Find a one-parameter family of flat connections $\nabla^{(\hbar)}$ on a $\Uni({\mathcal H})$ vector bundle~${\mathcal H}Z$ such that, for all
$\Phi,\Psi\in \Gamma({\mathcal H}Z) $ and  $u \in \Gamma(TZ)$,
\begin{enumerate}[(i)]
\item $ \mathcal{L}_u \langle \Phi, \Psi \rangle = \langle\nabla^{(\hbar)}_u \Phi , \Psi\rangle+\langle\Phi ,\nabla^{(\hbar)}_u \Psi\rangle \, ,$ \\[-2mm]
\item\label{limitformalflat} $\lim_{\hbar \to 0} i \hbar \nabla_u^{(\hbar)} \Psi =  \alpha(u) \Psi \, .$\hfill\scalebox{1.5}{${\large\diamond}$}
\end{enumerate}
\end{problem}
\noindent
To define the limit in Part~\eqref{limitformalflat}, we need a suitable norm. Since physical probabilities are computed using a pointwise Born rule (see Equation~\eqref{bornrule} below), for a section $\Psi \in \Gamma(\mathcal{H}Z)$, at each point $z \in Z$ we define 
\begin{align*}
	\|\Psi\|_z := \sqrt{\langle \Psi(z), \Psi(z) \rangle } \, .
\end{align*}
The limit in Part~\eqref{limitformalflat} is defined at each $z \in Z$ with respect to the above norm.
This choice will be used tacitly to define limits at several junctures below.

At a fixed value of $\hbar$, the data $({\mathcal H}Z,\nabla^{(\hbar)})$ is an example of a {\it quantum dynamical system}: Given a Hilbert space ${\mathcal H}$ and a manifold $Z$, 
the latter is defined as the data~$(\mathscr{H}Z,\nabla)$ consisting of a $\Uni({\mathcal H})$ hermitean vector bundle $\mathscr{H}Z$ over $Z$ with fiber $\mathcal{H}$ and a flat connection~$\nabla$ preserving the hermitean bundle inner product. We call a connection obeying these properties a \textit{quantum connection}. This connection governs quantum dynamics by parallel transport $$\nabla \Psi = 0 \, ,$$
where $\Psi \in \Gamma(\mathscr{H}Z)$. This is a generalization of the Schr\"odinger equation~\cite{Herczeg:2017xxy,Herczeg:2018hup}. In particular, if $\dot{\gamma}$ is the tangent vector to any path $\gamma$ in $Z$ for some notion of evolution, then $\nabla_{\dot\gamma}\Psi$ is the corresponding Schr\"odinger equation. This allows an implementation of the Born rule as follows: Given an initial state vector $\Psi(z_i)\in \mathscr{H}_{z_i} Z$ for $z_i \in Z$, we can compute the probability $P_{f,i}$ of observing a desired final state vector $\Psi(z_f) \in \mathscr{H}_{z_f} Z$ for $z_f \in Z$ by the Born rule 
\begin{align}\label{bornrule}
	P_{f,i} = \frac{\left| \langle \Psi(z_f) , \mathcal{P}_\gamma \Psi(z_i) \rangle \right|^2}{\langle \Psi(z_f) , \Psi(z_f)\rangle \langle \Psi(z_i) , \Psi(z_i)\rangle} \,\, ,
\end{align}
where $\gamma$ is any path on $Z$ from $z_i$ to $z_f$ and $\mathcal{P}_\gamma$ is the parallel transport operator determined by the flat connection $\nabla$. Note that when the fundamental group $\pi_1(Z)$ of the manifold $Z$ is non-trivial, in addition to being flat, we must require that $\nabla$ has trivial holonomy, in order that probabilities depend only on initial and final states.
\smallskip

While, as discussed above, a contact manifold $(Z,\alpha)$ provides a natural Hilbert bundle~$\mathcal{H}Z$, in general one might only hope for a formal solution $\nabla^{[\hbar]}$ to the flat connection Problem~\ref{wehaveproblems}. Here, by formal, we mean an asymptotic series in $\sqrt{\hbar}$. In fact, convergence of the formal solution $\nabla^{[\hbar]}$ to a \textit{bona fide} quantum connection may necessitate a suitable truncation of the Hilbert bundle and/or specialization to distinguished values of~$\hbar$; see, for example,~\cite{chwdynamics}. The seven sphere is an ideal setting for exploring these phenomena.

The symplectic manifold analog of Problem~\ref{wehaveproblems} was originally stated and formally solved as an asymptotic expansion in $\sqrt{\hbar}$ in seminal work by Fedosov~\cite{fedosovdeform}. The generalization of that result to contact manifolds was established in~\cite{chwdynamics,Elfimov_2022}. Following Fedosov, we shall formulate an asymptotic version of Problem \ref{wehaveproblems}. Note that a connection~$\nabla$ is said to be \textit{flat to order $\ell$} when its curvature $(\nabla)^2 = \hbar^{\ell/2} F_\ell\,,$ for some smooth two-form~$F_\ell$ that is polynomial in $\sqrt{\hbar}$ and takes values in $\End(\mathcal{H}Z)$. By a \textit{formal connection} $\nabla^{[\hbar]}$ that is \textit{flat to order $\ell = \infty$}, we mean an infinite sequence of connections whose $k$-th term~$\nabla^{\hbar,k}$ is flat to order~$k$. 
\begin{problem}\label{wehaveproblems2}
	Find a formal connection $\nabla^{[\hbar]}$ that is flat up to order $\ell = \infty$ on the $U({\mathcal H})$ vector bundle~${\mathcal H}Z$ such that, for all
	$\Phi,\Psi\in \Gamma({\mathcal H}Z) $ and  $u \in \Gamma(TZ)$,
	\begin{enumerate}[(i)]
		\item\label{formalhermitean} $ \mathcal{L}_u \langle \Phi, \Psi \rangle = \langle\nabla^{[\hbar]}_u \Phi , \Psi\rangle+\langle\Phi ,\nabla^{[\hbar]}_u \Psi\rangle $, \\[-2mm]
		\item\label{formallimit} $\lim_{\hbar \to 0} i \hbar \nabla_u^{[\hbar]} \Psi = \alpha(u) \Psi$.\hfill\scalebox{1.5}{${\large\diamond}$}
	\end{enumerate}
\end{problem} 
\noindent Of course, we are requiring that items~\eqref{formalhermitean} and \eqref{formallimit} hold for every element of the sequence defined by $\nabla^{[\hbar]}$. Our first main result concerns the standard contact seven sphere discussed in Section~\ref{sec:contactsphere}.
\begin{theorem}\label{wehavesolutions2}
	Problem~\ref{wehaveproblems2} has a solution on the seven sphere $S^7$ equipped with the standard contact form obtained from its homogeneous model $\Uni(2,\HH)/\Uni(1,\HH)$.
\end{theorem}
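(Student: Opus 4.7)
The plan is to exploit the homogeneous structure $S^7 \cong \Uni(2,\HH)/\Uni(1,\HH)$ to produce natural global data, then run a Fedosov-style iteration in powers of $\sqrt{\hbar}$ adapted to the contact setting. The reductive decomposition $\uni(2,\HH) = \uni(1,\HH) \oplus \frm$ splits the Maurer--Cartan form $\theta = \theta_{\frh} + \theta_{\frm}$; the $\frm$-part yields both the contact form $\alpha$ (its component along the Reeb direction in $\frm$) and a global symplectic coframe on the contact distribution $\xi \subset TS^7$. First I would use $\theta_{\frh}$ to define a canonical principal connection descending to the $\Sp(6,\RR)$-bundle of symplectic frames of $\xi$; because the isotropy $\Uni(1,\HH) = \Sp(1) \cong S^3$ is simply connected, an equivariant $\Mpc(6,\RR)$-lift exists globally. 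Passing to the associated Hilbert bundle $\mathcal{H}Z$ then produces a canonical hermitean connection $\nabla^{\mathrm{can}}$ that preserves $\langle\pdot,\pdot\rangle$.

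Next I would make the formal ansatz
\begin{equation*}
\nabla^{[\hbar]} \;=\; \nabla^{\mathrm{can}} + \tfrac{1}{i\hbar}\,\alpha\otimes \Id_{\mathcal{H}} + \sum_{k\ge 1} \hbar^{k/2}\, A_k,
\end{equation*}
with one-forms $A_k \in \Omega^1(Z;\End(\mathcal{H}Z))$ taken to be fibrewise antihermitean. Item~\eqref{formallimit} of Problem~\ref{wehaveproblems2} is immediate because the $(i\hbar)^{-1}\alpha$-term dominates in the pointwise norm as $\hbar \to 0$, while item~\eqref{formalhermitean} reduces to antihermiticity of the $A_k$ together with reality of $\alpha$ and the fact that $\nabla^{\mathrm{can}}$ is already hermitean. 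Flatness is imposed order-by-order: expanding $(\nabla^{[\hbar]})^2 = \sum_k \hbar^{k/2} F_k$, each equation $F_k = 0$ should determine $A_k$ modulo an obstruction built from $\nabla^{\mathrm{can}}$, $d\alpha$, and the previously constructed $A_{<k}$.

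This is where the quaternionic Holstein--Primakoff mechanism enters, and where the main difficulty lies. In symplectic Fedosov theory, obstructions are killed using a Poincar\'e-type lemma on the Weyl bundle, but in the contact case the Reeb direction breaks the relevant acyclicity, and one needs a homotopy operator compatible with both $\Uni(1,\HH)$-equivariance and the hermitean structure on $\mathcal{H}Z$. The Holstein--Primakoff embedding of $\uni(2,\HH)$ into a Heisenberg--Weyl algebra of appropriate rank realizes the homogeneous generators polynomially in symplectic-spinor creation and annihilation operators; I would use this embedding to construct an explicit inverse $\delta^{-1}$ of the order-$k$ obstruction map, together with a normal-ordering prescription that keeps each $A_k$ antihermitean and equivariant.

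The crux to verify is twofold. At order one, the obstruction $F_1$ must vanish: this is the statement that the $\Mpc$-curvature of $\nabla^{\mathrm{can}}$ reproduces $(i\hbar)^{-1} d\alpha$ modulo central terms, which should follow directly from the structure constants of $\uni(2,\HH)$ and the homogeneous origin of $\alpha$. At each subsequent order, the correction $A_k$ produced by $\delta^{-1}$ must actually lie in the Holstein--Primakoff image, so that $\nabla^{[\hbar]}$ is genuinely a connection on $\mathcal{H}Z$ rather than on a larger algebraic extension; inductive control of this containment, via the equivariance and the natural $\hbar^{1/2}$-grading of the Heisenberg--Weyl algebra, is the main technical obstacle. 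Granted these two points, the iteration produces the required sequence of connections, each flat to the next order, proving the theorem.
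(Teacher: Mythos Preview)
Your outline is a plausible general Fedosov-type strategy, but it diverges substantially from what the paper actually does, and the parts you flag as ``the crux to verify'' are precisely where the paper's argument lives---so as a proof, your proposal remains a sketch with its main steps unexecuted.

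The paper does \emph{not} run an order-by-order obstruction iteration with a homotopy operator~$\delta^{-1}$. Instead, the heavy lifting is done in Section~\ref{embeddings}: Theorem~\ref{firstembedding} constructs, by a direct Holstein--Primakoff-type ansatz, a reality-preserving Lie algebra embedding $\uni(2,\HH)\hookrightarrow \mathcal{W}^\hbar$ sending the generators $J^{\alpha\beta},P^\alpha_{\dot\alpha},K_{\dot\alpha\dot\beta}$ to explicit polymeromorphic elements $\mathcal{J}^{\alpha\beta},\mathcal{P}^\alpha_{\dot\alpha},\mathcal{K}_{\dot\alpha\dot\beta}$ (involving the formal square root $\sqrt{1/\hbar - N - n/2}$, not polynomials as you suggest). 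Once this embedding is in hand, the formal connection is simply the Maurer--Cartan form with Lie algebra generators replaced by their images and then truncated at order~$\ell$ (Equation~\eqref{ansatzconnection} and the explicit $\mathcal{A}^{\hbar,\ell}$ in the proof). Flatness to order~$\ell$ is then immediate from the Maurer--Cartan structure equations~\eqref{exteriorsystem} together with the fact that the embedding is a Lie algebra homomorphism; no recursive obstruction-solving is needed. Antihermiticity follows because the embedding is reality-preserving, and the $\hbar\to 0$ limit is read off from the leading $\frac{i}{\hbar}\alpha$ term in $\mathcal{K}_{\dot+\dot-}$.

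Two concrete gaps in your route: first, the Holstein--Primakoff mechanism is not a device for building a contact-adapted $\delta^{-1}$; it is the full content of the formal embedding, and your proposal does not say how to produce the relevant embedding (or equivalently how to actually solve your equations $F_k=0$). Second, your ansatz begins corrections at order $\hbar^{1/2}$, but the paper's connection already has nontrivial $\hbar^{-1/2}$ terms (Equation~\eqref{minusonenabla}); these arise from the $\heis_3$ part of the embedding and are essential for cancelling the curvature $\frac{i}{\hbar}d\alpha$ at the first step. Your approach might be completable, but the paper's direct method sidesteps both difficulties by exploiting the homogeneous structure to replace iterative analysis with a single algebraic construction.
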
 
The proof of Theorem \ref{wehavesolutions2} depends in part on the fact that fibers of the contact distribution are 6-dimensional symplectic vector spaces that can easily be quantized in terms of $7$-dimensional Heisenberg algebra $\heis_3$ acting on the Hilbert space $L^2(\RR^3)$. We also use that $S^7$ has a (local)~$\uni(2,\HH)$ exterior differential system determined by the homogeneous model. The Fedosov-type formal deformation quantization~\cite{fedosovdeform} for contact manifolds of~\cite{Herczeg:2018hup} relies on a formal expansion of a connection form taking values in the universal enveloping algebra of the Heisenberg algebra $\heis_3$. Hence, we are interested in the embeddings of the Lie algebra~$\uni(2,\HH)$ in the Weyl algebra $\mathcal{W}(\heis_3)$. We study a formal analog of this embedding problem in Section~\ref{embeddings}, the main result of which is the embedding given in Theorem~\ref{firstembedding}.
\smallskip

We are also interested in going beyond formality. There is an extensive literature on beyond formal quantizations of observables~\cite{rieffelheisenberg,rieffeloperator,rieffelquestions,rieffelconvolution,waldmann2019,landsmancoadjoint,hawkins2008}. Here, we only focus on quantum connections on a Hilbert bundle because, over a contact manifold, they already encode Schr\"odinger equations and hence quantum dynamics. (Note that such connections can also be employed to construct quantum observables, see~\cite{fedosovdeform,fedosovbook,Herczeg:2018hup,Corradini:2020vqa}.) As discussed above, requiring that the formal connection yields a quantum connection on the Hilbert bundle~$\mathcal{H}Z$ is too strong. Instead, at least for distinguished values of $\hbar$, one can require that the formal connection yields a quantum connection on a subbundle of $\mathcal{H}Z$. For that, we say that a formal connection on $\mathcal{H}Z$ \textit{induces a quantum dynamical system} on a subbundle $\mathbb{V}Z \subset \mathcal{H}Z$, when for any $\Psi \in \Gamma(\mathbb{V}Z)$ and $u \in \Gamma(TZ)$, the limit \begin{align}\label{limitconnections}
	\nabla^{\hbar}_u \Psi := \lim_{\ell \to \infty} \nabla^{\hbar,\ell}_u \, \Psi  
\end{align} 
exists and defines a connection $\nabla^{\hbar}$ on $\mathbb{V}Z$. 

In fact, we are able to write down an explicit generating function for a formal connection $\nabla^{[\hbar]}$ solving Problem \ref{wehaveproblems2}. This involves certain square roots of operators that generically lead to a violation of the unitarity Requirement~\eqref{formalhermitean} of Problem \ref{wehaveproblems2}. These are natural generalizations of the Holstein--Primakoff mechanism~\cite{holstein}, which describes quantum spins in terms of oscillators; for similar generalizations see~\cite{Okubo_1975, Klein_Marshalek_1991, Palev_1997, Oh_Rim_1997}. However, by tuning~$\hbar$ to certain distinguished values, the fibers of the Hilbert bundle $\mathcal{H}Z$ truncate to finite-dimensional ``symmetric tensor representations'' of~$\Uni(2,\HH)$ (see Remark~\ref{re:symmetrictensors}). Moreover, the asymptotic series $\nabla^{[\hbar]}$ converges when acting on these truncated representations. Hence, on the corresponding Hilbert subbundles, we obtain \textit{bona fide} induced quantum dynamical systems.   
\begin{theorem}\label{th:beyondformality}
	Let $\mathscr{I}:= \left\{ \frac{1}{m} \,\,\, | \,\,\, m \in \mathbb{Z}_{>0} \right\} \subset \RR \ni \hbar$. The Hilbert bundle $\mathcal{H}Z$ of Problem~\ref{wehaveproblems2} admits a filtration $$\mathcal{H}^1Z \subset \mathcal{H}^{1/2}Z \subset \mathcal{H}^{1/3}Z \subset \cdots \subset \mathcal{H}Z \, ,$$ where $(\mathcal{H}^{\hbar}Z)_{\hbar\in \mathscr{I}}$ are vector bundles associated to the homogeneous model by a family of~$\Uni(1,\HH)$-representations parametrized by $\mathscr{I}$, and obtained by restricting the symmetric tensor representations of $\uni(2,\HH)$. Moreover, when $\hbar \in \mathscr{I}$, the formal quantum connection~$\nabla^{[\hbar]}$ of Theorem~\ref{wehavesolutions2} solving Problem~\ref{wehaveproblems2} induces finite-dimensional quantum dynamical systems~$(\mathcal{H}^{\hbar}Z,\nabla^{\hbar})_{\hbar\in \mathscr{I}}$.  
\end{theorem}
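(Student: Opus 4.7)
The plan is to realize the filtration and the induced connections by exploiting the Holstein--Primakoff nature of the embedding of $\uni(2,\HH)$ into the Weyl algebra $\mathcal{W}(\heis_3)$ given by Theorem~\ref{firstembedding}. That embedding represents a distinguished subset of $\uni(2,\HH)$-generators via operators containing square-root factors of schematic form $\sqrt{\hbar^{-1}-N}$, where $N$ is the total number operator in the Fock realization of $L^2(\RR^3)$. When $\hbar=1/m$ for $m\in\ZZ_{>0}$, these factors annihilate states with $N=m$, so the subspace
$$
V_m := \Span\big\{\,|n_1,n_2,n_3\rangle \; : \; n_1+n_2+n_3\le m\,\big\} \subset L^2(\RR^3)
$$
is invariant under the embedded generators. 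First I would verify this invariance by inspecting the explicit expressions of Theorem~\ref{firstembedding}, and then identify $V_m$ with the symmetric tensor representation $\mathrm{Sym}^m(\CC^4)$ of $\uni(2,\HH)$ (of dimension $\binom{m+3}{3}$), with the $N$-grading matching the weight decomposition under the stabilizer $\uni(1,\HH)\subset \uni(2,\HH)$ of the homogeneous model.

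Next I would descend these truncations to the Hilbert bundle. Since $S^7 = \Uni(2,\HH)/\Uni(1,\HH)$, the bundle $\mathcal{H}Z$ is associated to the principal $\Uni(1,\HH)$-bundle $\Uni(2,\HH)\to S^7$ via a $\Uni(1,\HH)$-representation on $L^2(\RR^3)$ obtained by restriction. The $\Uni(1,\HH)$-subrepresentation on $V_m$ therefore defines a $\Uni(2,\HH)$-equivariant subbundle $\mathcal{H}^{1/m}Z\subset \mathcal{H}Z$. The chain of inclusions $V_1\subset V_2\subset \cdots$ induced by the filtration by $N$ yields the required filtration
$$
\mathcal{H}^1Z \subset \mathcal{H}^{1/2}Z \subset \mathcal{H}^{1/3}Z \subset \cdots \subset \mathcal{H}Z.
$$
Because each $\mathrm{Sym}^m(\CC^4)$ carries a canonical $\Uni(2,\HH)$-invariant hermitean form (compatible under restriction with the Fock inner product), each $\mathcal{H}^{1/m}Z$ is a hermitean vector bundle for the induced structure group $\Uni(V_m)$.

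Third, I would restrict the formal connection $\nabla^{[\hbar]}$ of Theorem~\ref{wehavesolutions2} to $\mathcal{H}^{1/m}Z$ at $\hbar=1/m$ and verify that the limit~\eqref{limitconnections} defines a bona fide quantum connection there. Since $\nabla^{[\hbar]}$ is built from polynomial and square-root combinations of $\mathcal{W}(\heis_3)$-operators, its action on $V_m$ is through bounded endomorphisms; moreover any term of degree exceeding $m$ in creation operators maps $V_m$ to $0$, so only finitely many terms in the $\sqrt{\hbar}$-expansion act nontrivially. This makes the limit in~\eqref{limitconnections} a well-defined $\End(V_m)$-valued one-form $\nabla^{1/m}$. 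Flatness is inherited from flatness to all orders of $\nabla^{[\hbar]}$ on the truncated subspace, while compatibility with the hermitean bundle metric follows from item~\eqref{formalhermitean} of Problem~\ref{wehaveproblems2} together with the fact that $\mathrm{Sym}^m(\CC^4)$ is a unitary $\Uni(2,\HH)$-representation, so the restricted generators are skew-hermitean on $V_m$.

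The principal obstacle is the bookkeeping underlying the Holstein--Primakoff truncation at the level of the full connection rather than just the Lie-algebra generators: $\nabla^{[\hbar]}$ involves composite expressions built from both the embedded generators and the coordinate-valued pieces from $\mathcal{W}(\heis_3)$, and one must check that \emph{every} such product preserves the cutoff $n_1+n_2+n_3\le m$. Concretely this reduces to showing that whenever a term would raise $N$ past $m$, it is accompanied by a factor of the form $\sqrt{1/\hbar-N}$ acting on the relevant state and hence vanishes. Once this closure property is established, convergence of the resulting finite sum and flatness of $\nabla^{1/m}$ follow, completing the proof.
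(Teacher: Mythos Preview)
Your overall strategy matches the paper's: build the subbundles $\mathcal{H}^{\hbar}Z$ as associated bundles for the $\Uni(1,\HH)$-subrepresentations cut out by the Holstein--Primakoff truncation, then show the formal connection converges there. Two points, one minor and one a real gap.

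\textbf{Off-by-one.} In the paper's conventions the square root is $\sqrt{\tfrac{1}{\hbar}-N-\tfrac{n}{2}}$, whose eigenvalue on $|n_1,n_2,n_3\rangle$ is $\sqrt{m-(n_1+n_2+n_3)-1}$ when $\hbar=1/m$. So the invariant subspace is $\{n_1+n_2+n_3\le m-1\}$, of dimension $\binom{m+2}{3}$, and the corresponding irreducible is $\mathrm{Sym}^{m-1}(\CC^4)$ (highest weight $(m-1,0)$), not $\mathrm{Sym}^m$. This is harmless but worth fixing.

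\textbf{Convergence.} Your claim that ``any term of degree exceeding $m$ in creation operators maps $V_m$ to $0$, so only finitely many terms in the $\sqrt{\hbar}$-expansion act nontrivially'' is the wrong mechanism and does not hold. The $\hbar$-expansion of the square root is
\[
\sqrt{\tfrac{1}{\hbar}-N-\tfrac{n}{2}}=\tfrac{1}{\sqrt{\hbar}}\sum_{k\ge 0} b_k\big(N+\tfrac{n}{2}\big)\,\hbar^k,
\]
and the coefficients $b_k(N+\tfrac{n}{2})$ are polynomials in the \emph{number operator}, not excess creation operators; on $|n_1,n_2,n_3\rangle$ they act as multiplication by $b_k(n_1+n_2+n_3+1)\neq 0$. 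Every term in the series acts nontrivially. What actually gives convergence (this is the paper's Proposition~\ref{squarerootconverges}) is that on the truncated space one has $\hbar(n_1+n_2+n_3+1)\le 1$, so one is inside or at the boundary of the radius of convergence of the Taylor series for $\sqrt{1-x}$; absolute convergence at $x=1$ handles the top level. Proposition~\ref{operatorconvergence} then checks that the raising pieces which could escape the truncation are always accompanied by a vanishing square-root factor, exactly the closure property you flag at the end. So your invariance argument is right, but your convergence argument needs to be replaced by the analytic one.

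A smaller remark on flatness: the paper does not deduce flatness of $\nabla^{\hbar}$ from ``flatness to all orders plus convergence'' as you propose. Instead it identifies the limit connection with the tractor connection induced by the flat Maurer--Cartan form on $\Uni(2,\HH)$, which is flat on the nose. Your route can be made to work, but you would need to argue that the curvature of the partial sums tends to zero in norm on the finite-dimensional fiber, which is an extra step.
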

\noindent 
The quantum dynamical systems $(\mathcal{H}^{\hbar}Z,\nabla^{\hbar})_{\hbar\in \mathscr{I}}$ appearing in the above theorem could have been constructed directly from the data of the homogeneous model, its Cartan connection and the symmetric tensor representations of Remark~\ref{re:symmetrictensors}. However, the fact that these quantum dynamical systems are the output of a formal quantization procedure is of primary interest. The key ingredient is Theorem~\ref{th:summarydiagram}, which shows that these representations are obtained from limits of the formal embedding. We also note that these representations are special cases of highest weight representations associated to distinguished values of $\hbar$ appearing in the coadjoint orbit quantization of~\cite{landsmancoadjoint}. In particular, the dimension of these representations becomes large in the small $\hbar$ classical limit.

\section{The contact seven sphere}\label{sec:contactsphere}
The seven sphere is the real hypersurface in $\HH^2\ni (x,y)$ satisfying 
\begin{align}\label{spherecond}
	|x|^2 + |y|^2=1 \, ,
\end{align}
where the space of quaternions ${\mathbb H}$ is a 4-dimensional associative algebra over $\RR$ with basis $\{1,\qi,\qj,\qk\}$. The multiplication for quaternions is determined by that of the quaternionic units $\vec e=(\qi,\qj,\qk)$,
$$
{\qi}^2={\qj}^2={\qk}^2=\qi \qj \qk=-1\, ,
$$  
where $1$ is the multiplicative identity.
Denoting a quaternion $q$ in $\HH$ by $q = q_0 + \vec q \cdot \vec e$, where $(q_0,\vec q)\in {\mathbb R}^4$, \textit{quaternionic conjugation} is defined by $\bar q := q_0 - \vec q\cdot \vec e$ and $|q|^2:=\bar q q$. When~$q = - \bar{q}$, we say that $q$ is \textit{purely quaternionic}. 

Calling
$$
z=x+y{\bm l}\in {\mathbb O}
$$
and $\bm m := \bm{i} \bm{l}$, $\bm n := \bm{j} \bm{l}$ and $\bm o:=\bm{k} \bm{l}$, where $\{\qi,\qj,\qk,\bm{l},\bm{m},\bm{n},\bm{o}\}$ are the standard octonionic units, the seven sphere becomes the space of unit octonions
$$
	\|z\|^2 = |x|^2 + |y|^2 = 1\, .
$$
Denoting octonionic conjugation also by a bar, the one-form
$
\frac12(\bar z dz-d\bar z z) \in \Gamma(T^*{\mathbb O})$, pulled back along the inclusion $i:S^7\hookrightarrow {\mathbb O}$,  obeys
$$
\theta:=\frac12i^*(\bar z dz-d\bar z z)=
 i^*(\bar z dz) = i^*(-d\bar z z) \, .
$$
Moreover, $\theta$ is purely octonionic, \textit{i.e.} $\theta = -\bar\theta$, and determines seven real global contact one-forms that parallelize $S^7$.
\smallskip

A hermitean form $\langle \pdot, \pdot \rangle$ on $\HH^2$ is defined by 
$
\langle P,Q \rangle := {P}^\dagger Q$,		
where $P$ and $Q$ denote  quaternion-valued two-component column vectors, and $\dagger$ is the quaternionic conjugate transpose operation. Also, $\|Q\|^2 := \langle Q, Q \rangle$.
The hermitean form is preserved by the quaternionic unitary group 
$$
	\Uni(2,{\mathbb H}) = \{ g \in  \GL(2,\mathbb{H}) \,\, | \,\,  g^\dagger g= \operatorname{Id}\} \, .
$$
This is a real compact Lie group of dimension $10$, see~\cite[Chapter~1]{knappbeyondliegroups}. 
Its Lie algebra is the space of quaternionic, antihermitean $2\times 2$
matrices
$$
\uni(2,\HH)=\left\{\frac12
\begin{pmatrix}
\vec J \cdot \vec e & P_0+\vec P \cdot \vec e\\
-P_0+\vec P\cdot \vec e &\vec K \cdot  \vec e
\end{pmatrix}
=:\vec J\cdot \vec \jmath + P_0  p_0+\vec P \cdot \vec p + \vec K \cdot \vec k \,\,\, \Big| \,\,\, \vec{J}, \vec{P}, \vec{K} \in \RR^3, \, P_0 \in \RR 
\right\}\, ,
$$
which is a real form of the complex symplectic Lie algebra $\sp(4,\CC)$. Indeed, $\uni(2,\HH)$ is the fixed point set of the conjugate linear involutive  automorphism $X \mapsto -X^\dagger$ of $\sp(4,\CC)$, where
\begin{align}\label{reality}
	(j_i)^\dagger = -j_i \, , \quad (p_0)^\dagger = -p_0 \, , \quad (p_i)^\dagger = -p_i \, , \quad (k_i)^\dagger = -k_i \, .
\end{align}
Here, the indices $i,j,k,\ldots$ take values $1,2,3$ and we raise and lower the indices with the Kronecker symbol $\delta_{ij}$ and its inverse.

The (non-vanishing) Lie brackets of the $\uni(2,\HH)$ generators $\{\vec \jmath,p_0,\vec p, \vec k\}$ follow from the above matrix representation and are given by
\begin{eqnarray*}
{}&	[\jmath_i,\jmath_j] =  \epsilon_{ij}{}^{k} \jmath_k \, ,\qquad \qquad  [k_i,k_j] = \epsilon_{ij}{}^{k} k_k\, ,&\\[1mm]
{}&	 [p_i,\jmath_j] =\frac12( \epsilon_{ij}{}^{k} p_k + \delta_{ij} p_0)\, ,
 \qquad 
{}	 [p_i, p_j] =  \epsilon_{ij}{}^{k}(\jmath_k +  k_k)\, ,
\qquad
{}[p_i,k_j] =\frac12( \epsilon_{ij}{}^{k} p_k - \delta_{ij} p_0)
	\,, &\\[1mm]
{}	&[p_0, \jmath_i] = -\frac12p_i \, ,\qquad \qquad [p_0,p_i] =  \jmath_i -k_i \, ,\qquad \quad \quad [p_0,k_i] =  \frac12p_i\, .&
\end{eqnarray*}
The above presentation exhibits the branching rule 
\begin{equation}\label{br343}
	\uni(2,\HH)=\uni(1,\HH)\oplus \HH \oplus \uni(1,\HH)\, 
\end{equation}
with respect to the diagonal $\Uni(1,\HH)\times \Uni(1,\HH)$ subgroup of
$\Uni(2,\HH)$, generated by $\{\vec{\jmath}, \vec{k}\}$. 

It is useful to change basis to new generators 
\begin{align}\label{spinordef}
	\scalebox{0.86}{$(J^{\alpha \beta}) := 2 \begin{pmatrix}
			-\jmath_1 + i \jmath_2 & - j_3 \\ -j_3 & \jmath_1 + i \jmath_2
		\end{pmatrix} \, , \;\; (P^\alpha_{\dot{\alpha}}) := \begin{pmatrix}
			-p_3 - i p_0 & -p_1 + i p_2  \\ p_1 + i p_2 & -p_3 + i p_0
		\end{pmatrix} \, , \;\; (K_{\dot{\alpha} \dot{\beta}}) := 2 \begin{pmatrix}
			k_1 + i k_2 & - k_3 \\ -k_3 & -k_1 + i k_2
		\end{pmatrix} \, ,$}
\end{align}
yielding
\begin{eqnarray*}
{}&	\scalebox{0.95}{[$J^{\alpha \beta},J^{\gamma \delta}] = i( \epsilon^{\alpha \gamma}J^{\beta\delta}+\epsilon^{\alpha\delta}J^{\beta\gamma}+\epsilon^{\beta \gamma}J^{\alpha\delta}+\epsilon^{\beta \delta} J^{\alpha \gamma}) $},&\\[1mm]
{}&	\hspace{-0.26 cm} \scalebox{0.908}{$[J^{\alpha \beta},P^\gamma_{\dot{\gamma}}] = i( \epsilon^{\alpha \gamma}P^\beta_{\dot{\gamma}}+\epsilon^{\beta \gamma}P^\alpha_{\dot{\gamma}}), 
	\;\; 
	{}	 [P^\alpha_{\dot{\alpha}},P^\beta_{\dot{\beta}}] = i(  \epsilon_{\dot{\alpha} \dot{\beta}}J^{\alpha \beta}+\epsilon^{\alpha \beta}K_{\dot{\alpha}\dot{\beta}})\, ,
	\;\;
	{}[K_{\dot{\alpha} \dot{\beta}},P^\gamma_{\dot{\gamma}}] = i( \epsilon_{\dot{\alpha} \dot{\gamma}}P^\gamma_{\dot{\beta}}+\epsilon_{\dot{\beta} \dot{\gamma}}P^\gamma_{\dot{\alpha}})
$}\,, &\\[1mm]
{}	&\scalebox{0.95}{$[K_{\dot{\alpha} \dot{\beta}},K_{\dot{\gamma} \dot{\delta}}] =  i(\epsilon_{\dot{\alpha} \dot{\gamma}}K_{\dot{\beta}\dot{\delta}}+\epsilon_{\dot{\alpha}\dot{\delta}}K_{\dot{\beta}\dot{\gamma}}+\epsilon_{\dot{\beta} \dot{\gamma}}K_{\dot{\alpha} \dot{\delta}}+\epsilon_{\dot{\beta} \dot{\delta}} K_{\dot{\alpha} \dot{\gamma}})$}\, ,&
\end{eqnarray*}
where
\begin{align*}
	(-\epsilon^{\alpha \beta}) = \begin{pmatrix}
		0 & -1 \\ 1 & 0
	\end{pmatrix} = (\epsilon_{\dot{\alpha} \dot{\beta}}) \, .
\end{align*}
Here and above, $\alpha\in\{+,-\}$ and $\dot{\alpha}\in\{\dot{+},\dot{-}\}$, so that $\epsilon_{\dot{+}\dot{-}}=-1$.

\smallskip

The reality conditions in Equation~\eqref{reality} now become
\begin{align}\label{spinorreality}
	\scalebox{0.87}{$(J^{++})^\dagger = J^{--}  \,, \;\;  (J^{+-})^\dagger = - J^{+-}  , \;\;  (P^+_{\dot{+}})^\dagger = - P^-_{\dot{-}} \, , \;\; (P^+_{\dot{-}})^\dagger = P^-_{\dot{+}} \,, \;\;  (K_{\dot{+}\dot{+}})^\dagger = K_{\dot{-}\dot{-}} \, , \;\; (K_{\dot{+}\dot{-}})^\dagger = -K_{\dot{+}\dot{-}} \, .$}
\end{align}
We are now ready develop the homogeneous model for the seven sphere. Useful references for homogeneous models for spheres include~\cite{montgomerysphere,borelsphere} and \cite[Chapter~7]{besseeinstein}.
\smallskip

\subsection{Homogeneous model}\label{subsec:homogeneousmodel}
The group $\Uni(2,{\mathbb H})$ acts transitively on the seven sphere $$S^7 =
\left \{ Q \in \mathbb{H}^2 \,\,\, \big| \,\,\, \|Q\|^2 = 1 \right \}\,.$$
Therefore, we view it as the homogeneous $\Uni(2,\HH)$-space 
\begin{align}
	S^7 \cong \Uni(2,\HH)/\Uni(1,\HH) \,,
\end{align}
where $\Uni(1,\HH)$ is the stabilizer of some element $o$ in $S^7$. We may choose $o = (0,1)$. Let 
\begin{align}\label{groupelement}
	g=\begin{pmatrix} w & x \\ z & y
	\end{pmatrix} \in \Uni(2,\HH) \, ,
\end{align}
so that $$ {} \left\|g \begin{pmatrix}
	0 \\ 1
\end{pmatrix} \right\|^2 = |x|^2 + |y|^2 = 1 \, ,
$$ 
which recovers the unit seven sphere condition. Moreover, $$ \begin{pmatrix}
	0 & 1
\end{pmatrix}g^{-1} dg \begin{pmatrix}
	0 \\ 1
\end{pmatrix} = 
\bar{x} \, dx + \bar{y}\, dy \, \,
$$
is a purely quaternionic one-form. This determines the first three globally defined real, contact one-forms obtained from the octonions as described above.

We would like to obtain an exterior differential system of one-forms on $S^7$ from the Maurer--Cartan one-form $\omega$ on $\Uni(2,\HH)$. The absence of global sections of the principal~$\Uni(1,\HH)$-bundle 
\begin{equation*}
	\begin{tikzcd}
		\Uni(1,\HH) \arrow[r] & \Uni(2,\HH) \arrow[d] \\ & S^7 
	\end{tikzcd}  
\end{equation*}   implies that such exists only on a local patch. One way to obtain such a patch is to assume that the quaternion $x$ appearing in $g$ is non-zero. By right multiplication with an element $h \in \Stab(o)\cong\Uni(1,\HH)$, one can bring $g$ to the form
$$
g_s=\begin{pmatrix} -\bar{x}^{-1} \bar{y} \bar{x} & x \\ \bar{x} & y
\end{pmatrix}\, .
$$
This amounts to choosing some local section $s : S^7 \setminus \{ x = 0 \} \to \Uni(2,\HH)$. Let us denote by $A_s$ the pullback $s^*\omega$ of the Maurer--Cartan form $\omega$ on $\Uni(2,\HH)$ along $s$, for which we find
\begin{align}\label{southmc}
	 A_s = \frac{1}{2}\begin{pmatrix}
		\mu & \nu  \\ - \bar{\nu} & \kappa 
	\end{pmatrix} = \vec \mu \hh \cdot \hh \vec \jmath + \nu_0  p_0+\vec \nu \hh \cdot \hh \vec p + \vec \kappa \hh \cdot \hh \vec k = \mu_{\alpha\beta}J^{\alpha\beta} + \nu^{\dot{\alpha}}_{\alpha} P^\alpha_{\dot{\alpha}} + \kappa^{\dot{\alpha}\dot{\beta}} K_{\dot{\alpha}\dot{\beta}}\, , 
\end{align}
where 
\begin{align*}
	\mu &= \frac{2}{|x|^2} \left( x d\bar{x} + x y  d\bar{y} \bar{x} \right ) + 2 \hh x y x^{-1} d (\bar{x}^{-1} ) \bar{y} \bar{x}  \, , \\
	\nu &= 2(x dy - x y x^{-1} dx) \, , \\
	\kappa &= 2(\bar{x} dx + \bar{y} dy) \,.
\end{align*}
and
\begin{eqnarray}\label{doubleindexforms}
	{}&\mu_{++} := -\frac{\mu^1 + i \mu^2}{4} \, , \qquad \mu_{+-} := -\frac{\mu^3}{4} =: \mu_{-+} \, , \qquad \mu_{--} := \frac{\mu^1 - i \mu^2}{4} &\nonumber\\[1mm]
	{}&\nu^{\dot{+}}_{+} := \frac{-\nu^3 + i \nu^0}{2} \, , \qquad \nu^{\dot{+}}_{-} := \frac{\nu^1-i\nu^2}{2} \, , \qquad \nu^{\dot{-}}_{+} := -\frac{\nu^1+i\nu^2}{2} \, , \qquad \nu^{\dot{-}}_{-} := -\frac{\nu^3 + i \nu^0}{2} \, , &\\[1mm]
	{}&\kappa^{\dot{+}\dot{+}} := \frac{\kappa^1 - i \kappa^2}{4} \, , \qquad \kappa^{\dot{+}\dot{-}} := -\frac{\kappa^3}{4} =: \kappa^{\dot{-}\dot{+}} \, , \qquad \kappa^{\dot{-}\dot{-}} := -\frac{\kappa^1 + i \kappa^2}{4} \, . &\nonumber
\end{eqnarray}
Note that the triple of one-forms defined by $\kappa$ are defined globally, and each of them is a contact form on $S^7$. Also, the sphere condition \eqref{spherecond} implies 
\begin{align*}
	\Re(\bar{x}dx + \bar{y}dy) = 0 \, .
\end{align*} 

The Maurer--Cartan equation
$ d\omega + \omega \wedge \omega = 0  $ leads to the exterior differential system
\begin{align}
	&d\mu^i + \frac{1}{2}\epsilon^i{}_{jk} \left(\mu^j\wedge\mu^k + \nu^j \wedge \nu^k\right) +  \nu^0 \wedge \nu^i = 0 \, , \nonumber\\
	&d\nu^i + \frac{1}{2} \epsilon^i{}_{jk} \left(\mu^j\wedge\nu^k + \kappa^j \wedge \nu^k\right) - \frac{1}{2}\nu^0 \wedge (\mu^i - \kappa^i) = 0\, , \nonumber\\[-2.6 mm] \label{exteriorsystem} \\[-2.6 mm]
	&d\nu^0 + \frac{1}{2}\delta_{ij} \, \nu^i \wedge (\mu^j-\kappa^j) = 0\, , \nonumber\\
	&d\kappa^i + \frac{1}{2}\epsilon^i{}_{jk} \left(\kappa^j\wedge\kappa^k + \nu^j \wedge \nu^k\right) - \nu^0 \wedge \nu^i = 0 , \nonumber
\end{align}
for the 10 one-forms $(\vec{\mu},\nu^0,\vec{\nu},\vec{\kappa})$ on $S^7 \setminus \{ x = 0\}$.
 
To globally describe the Maurer--Cartan form pulled-back to $S^7$, we show how to pass from a patch where $x$ is non-vanishing to one where $y$ is non-vanishing. For the latter, we have $$g_n = \begin{pmatrix}
	\bar{y} & x \\ - \bar{y}^{-1} \bar{x} \bar{y} & y 
\end{pmatrix} \,,$$
where the corresponding choice of section $n : S^7 \setminus \{y = 0\} \to \Uni(2,\HH)$ was obtained by right multiplying the group element $g$ of Equation~\eqref{groupelement} with
$$ \begin{pmatrix}
	w^{-1}\bar{y} & 0 \\ 0 & 1
\end{pmatrix} \,.$$
(Because $y$ is invertible so too is $w$.) 

On the overlap of the two patches, the group elements $g_s$ and~$g_n$ are related by right multiplication by an element $h$ of the structure group $\Uni(1,\HH)$: $$g_n = g_s \hh h \, , \quad \mbox{ where }\,  h = \begin{pmatrix}
	\tau & 0 \\ 0 & 1
\end{pmatrix} \, \mbox{ with } \, \tau := - \bar{x}^{-1} \bar{y}^{-1} \bar{x} \bar{y} \, \, .$$ The pullback of the Maurer-Cartan form along the section $n$ is then given by
$$ A_n =  g_n^{-1} dg_n = \frac{1}{2}\begin{pmatrix}
	\mu_n & \nu_n  \\ - \bar{\nu}_n & \kappa_n
\end{pmatrix} = h^{-1} \hh A_s \hh h + h^{-1} dh \, ,$$
where \begin{align*}
	\frac{\mu_n}{2} = \bar{\tau}\hh \frac{\mu}{2} \hh \tau + \bar{\tau} d\tau = \frac{1}{|y|^2} \left( y d\bar{y} + y x  d\bar{x} \bar{y} \right ) + y x y^{-1} d (\bar{y}^{-1} ) \bar{x} \bar{y}   \, , \, \quad
	\nu_n = \bar{\tau} \nu = 2(y dx - y x y^{-1} dy) \, .
\end{align*}
Note that as $\kappa$ is global, we have $\kappa_n=\kappa$. Also, since the homogeneous model is reductive (see below), $\frac{\mu}{2}$ is the pullback of the principal~$\Uni(1,\HH)$-connection obtained by projecting the Maurer--Cartan form~$\omega$ onto the $\uni(1,\HH)$-component generated by $\vec{\jmath}$. Therefore (as seen above) it transforms as a connection with respect to the $\Uni(1,\HH)$ element~$\tau$.
\smallskip

We note that the above treatment of $S^7$ as a homogeneous space is a special case of a well-established more general theory (see, for example, \cite{sharpe,kobayashinomizu2,cap2009parabolic}): Recall that a homogeneous space $G/H$ is said to be reductive~\cite[Chapter~4]{sharpe} when the Lie algebra $\frg$ of $G$ has a direct sum decomposition $\frg = \frh \oplus \frm$, where $\frh$ is the Lie algebra of $H$ and $$\Ad_H \frm \subseteq \frm\, .$$ In that case, the projection $\omega_{\frh}$ of the Maurer--Cartan form $\omega$ on $G$ onto $\frh$ is a principal connection because it gives a equivariant direct sum decomposition of $TG$ into vertical and horizontal parts. Furthermore, there is a one-to-one correspondence between horizontal $H$-equivariant differential $k$-forms on $G$ with values in a $G$-representation $\mathbb{V}$ and differential $k$-forms on $G/H$ with values in the associated vector bundle $G \times_H \mathbb{V}$~\cite{kobayashinomizu1,cap2009parabolic}.  In other words, \begin{align}\label{equivalencedifforms}
	\Omega^k_{\omega_{\frh}}(G,\mathbb{V})^H \cong \Omega^k(G/H, G \times_H \mathbb{V})\, .
\end{align} Also, $\omega$ defines a principal connection one-form $\hat{\omega}$ on the extended principal $G$-bundle $\hat{G} := G \times_H G$ over $G/H$ (see for instance \cite[Chapter~1]{cap2009parabolic}) and $\hat{G} \times_G \mathbb{V} \cong G \times_H \mathbb{V}$, so $$\Omega^k_{\hat{\omega}}(\hat{G},\mathbb{V})^G \cong \Omega^k(G/H, G \times_H \mathbb{V})\,.$$ Returning to $S^7 \cong \Uni(2,\HH) / \Uni(1,\HH)$, the pullback of $\omega_\frh$ along $s$ gives $\mu$. Also, when $k=1$ and $\mathbb{V} = \RR$ the trivial representation of $H = \Uni(1,\HH)$,  Equation~\eqref{equivalencedifforms} shows that the $\uni(1,\HH)$-component (the one generated by $K_{\dot{\alpha}\dot{\beta}}$) of $\omega$ on $G = \Uni(2,\HH)$ gives globally defined three one-forms $\kappa^1$, $\kappa^2$, $\kappa^3$ on $S^7$, which are all contact.

\subsection{Reeb dynamics}
The integral curves of the Reeb vector fields of the three contact forms $\kappa^i$ canonically determine dynamical trajectories. These vector fields are particularly simple when the seven sphere is decomposed into tori. For that, let $$ x = r_1 e^{-\qk \theta_1} + \qj r_2 e^{-\qk \theta_2}\,, \qquad y = r_3 e^{-\qk \theta_3} + \qj r_4 e^{-\qk \theta_4}\, ,$$ 
where $$r_1^2 + r_2^2 + r_3^2 + r_4^2 = 1\,, $$
and $\theta_1,\theta_2,\theta_3,\theta_4 \in [0,2\pi) $, $r_1,r_2,r_3,r_4 \in [0,1]$. The contact form~$ \alpha := - \frac{\kappa^3}{2}$ then becomes 
\begin{align*}
	\alpha &= r_1^2 d\theta_1 + r_2^2 d\theta_2 + r_3^2 d\theta_3 + r_4^2 d\theta_4 \, .
\end{align*} 
The corresponding Reeb vector field is
\begin{align*}
	R &=  \frac{\partial}{\partial \theta_1} + \frac{\partial}{\partial \theta_2} + \frac{\partial}{\partial \theta_3} +\frac{\partial}{\partial \theta_4} \, \, 
\end{align*} 
and Reeb orbits are periodic.

\section{Formal embedding}\label{embeddings}

The contact distribution $\xi$ is a symplectic vector bundle over the contact manifold~$S^7$. In particular, its fibers are~6-dimensional symplectic vector spaces with respect to the Levi two-form $d\alpha$ restricted to $\xi$. The Heisenberg algebra $\heis_3$ is a central extension of these fiberwise symplectic vector spaces viewed as trivial Lie algebras. The Lie algebra~$\heis_3$ will control solutions to the formal deformation quantization Problem~\ref{wehaveproblems2} for~$7$-dimensional contact manifolds. For that, we need to consider its universal enveloping algebra~$\mathcal{U}(\heis_3)$. As an algebra,~$\mathcal{U}(\heis_3)$ is the free associative algebra over $\CC$, with generators~$I, a, a^\dagger, a^\alpha_{ \dot{\alpha}}$, quotiented by the ideal generated by $$ a \hh I - I \hh a \, , \quad a^\dagger \hh I - I \hh a^\dagger \, , \quad a a^\dagger - a^\dagger a - I \, , \quad a^\alpha_{ \dot{\alpha}} a^\beta_{ \dot{\beta}} - a^\beta_{ \dot{\beta}} a^\alpha_{ \dot{\alpha}} +  \epsilon^{\alpha\beta}\eta_{\dot{\alpha}\dot{\beta}} \hh I \, , \quad a^\alpha_{ \dot{\alpha}} \hh I - I \hh a^\alpha_{ \dot{\alpha}} \, \, .$$ 
It follows that $\mathcal{U}(\heis_3)$ is a Lie algebra with non-vanishing brackets given by commutators
\begin{align*}
	[a, a^\dagger] = I \, , \qquad [a^\alpha_{ \dot{\alpha}}, a^\beta_{ \dot{\beta}}] = - \epsilon^{\alpha\beta}\eta_{\dot{\alpha}\dot{\beta}} \, .
\end{align*}
The Heisenberg algebra $\heis_3$ is the Lie subalgebra of $\mathcal{U}(\heis_3)$ spanned by $\langle I, a, a^\dagger, a^\alpha_{\dot{\alpha}} \rangle$. In what follows, we will be primarily concerned with the unital Weyl algebra $\mathcal{W}(\heis_3)$ over~$\CC$, where the generator $I$ is realized by the identity element $1$.

We may equip $\mathcal{W}(\heis_3)$ with a conjugate linear involutive automorphism $\dagger$, where  
$$ (a)^\dagger = a^\dagger \, , \quad (a^+_{\dot{+}})^\dagger = -a^-_{\dot{-}} \, , \quad (a^+_{\dot{-}})^\dagger = a^-_{\dot{+}} \, ,$$
and for all $x,y \in \mathcal{W}(\heis_3)$,  $(xy)^\dagger := y^\dagger x^\dagger$. Eventually,
the above rules will follow from the reality conditions of Equation~\eqref{reality}.

Let us sketch our formal quantum connection construction (see Subsection~\ref{sec:formalquantization} for details) in order to better motivate the formal embedding problem. The globally defined contact one-form $\alpha = 2 \hh \kappa^{\dot{+}\dot{-}}$ and the central element~$1 \in \mathcal{W}(\heis_3)$ will determine a $\Uni(1)$-connection
\begin{align}\label{minustwonabla}
	\nabla^{\hbar,-2} = \frac{i}{\hbar} \alpha \, 1 + d \, ,
\end{align}
acting on the sections of the Hilbert bundle $\mathcal{H}Z$. We consider a grading with respect to $\sqrt{\hbar}$ so that~$\frac{1}{\hbar}$ has grade $-2$. The connection $\nabla^{\hbar,-2}$ is flat to order $\ell = -2$, meaning its curvature $$(\nabla^{\hbar,-2})^2 = \frac{i}{\hbar} \, d\alpha \, 1\, .$$
We next seek a connection satisfying Items~\eqref{formalhermitean} and~\eqref{formallimit} of Problem~\ref{wehaveproblems2} that is flat to order~$\ell = -1$. This is achieved by adding one-forms valued in $\End(\mathcal{H}Z)$ to $\nabla^{\hbar,-2}$ in order to cancel the curvature $(\nabla^{\hbar,-2})^2$. Employing the exterior system of Equation~\eqref{exteriorsystem} leads to a new connection
\begin{align}\label{minusonenabla}
	\nabla^{\hbar,-1} = \frac{i}{\hbar} \alpha \, 1 +  \frac{1}{\sqrt{\hbar}} \left( - 2 i \, \kappa^{\dot{+} \dot{+}} \, a + 2 i \, \kappa^{\dot{-} \dot{-}} \, a^\dagger  + \, \nu^{\dot{+}}_{+}\, a^{+}_{\dot{+}} + \, \nu^{\dot{-}}_{-} \, a^{-}_{\dot{-}} + \, \nu^{\dot{+}}_{-}\, a^{-}_{\dot{+}} + \, \nu^{\dot{-}}_{+}\, a^{+}_{\dot{-}}\right) + d \,
\end{align}
that is flat to order $\ell=-1$. To write the above formula, we used the one-forms defined by Equation~\eqref{southmc} in the local patch on $S^7$ corresponding to the section $s$, but could have equally well used the section $n$.

One can now try to continue along these lines to achieve higher order asymptotic flatness. For that, consider the ansatz
\begin{align}\label{ansatzconnection}
	\nabla^{[\hbar]} = d + \kappa^{\dot{\alpha}\dot{\beta}}\, \mathcal{K}_{\dot{\alpha} \dot{\beta}} + \nu^{\dot{\alpha}}_\alpha \, \mathcal{P}^\alpha_{ \dot{\alpha}} + \mu_{\alpha\beta} \, \mathcal{J}^{\alpha\beta} = : d + \mathcal{A} \, ,
\end{align}
where $$\mathcal{K}_{\dot{+}\dot{-}} = \frac{i}{\hbar} \, 1 + \cdots \, \, \, .$$ 
Here, the $\cdots$'s denote terms of higher grades in $\sqrt{\hbar}$ taking values in $\mathcal{W}(\heis_3)$. The triple $\mathcal{K}_{\dot{\alpha}\dot{\beta}}$,  $\mathcal{P}^{\alpha}_{\dot\alpha}$, $\mathcal{J}^{\alpha\beta}$ are also elements of the ring of formal Laurent series in $\sqrt{\hbar}$ with coefficients in $\mathcal{W}(\heis_3)$. Since the curvature of $\nabla^{[\hbar]}$ is now governed by the $\uni(2,\HH)$ exterior differential system of Equation~\eqref{exteriorsystem}, flatness to order~$\ell = \infty$ can be achieved if the $\mathcal{W}(\heis_3)$-valued differential form coefficients (formally) generate~$\uni(2,\HH)$. Hence, we seek formal embeddings of $\uni(2,\HH)$ into this ring.

To be more precise, let us denote the space of formal Laurent series in the indeterminate~$\sqrt{\hbar}$ with coefficients in a ring $R$ by $$R^{\hbar} := R\big[\sqrt{\hbar}^{-1},\sqrt{\hbar}\hh\big]\big] \ni \sum_{k=M}^{\infty} r_k \sqrt{\hbar}^k,$$ where $r_k \in R$ and $M$ is some integer. Two such series equal one another if and only if the sequences formed by their coefficients are the same. Note that $R^{\hbar}$ is a ring with addition and multiplication defined in the standard way. We are particularly interested in the case $R = \mathcal{W}(\heis_3)$, which is both a ring and a $\CC$-module. In turn, $$\mathcal{W}^{\hbar} = \mathcal{W}(\heis_3)[\sqrt{\hbar}^{-1},\sqrt{\hbar}]]\,$$
is a $\CC^{\hbar}$-module. Moreover, because $\mathcal{W}(\heis_3)$ is an associative $\CC$-algebra, $\mathcal{W}^{\hbar}$ is an associative $\CC^{\hbar}$-algebra. Thus, it is a Lie algebra with brackets given by commutators. The dagger operation is extended to $\mathcal{W}^{\hbar}$ in the obvious way, in particular, $\sqrt{\hbar}^\dagger := \sqrt{\hbar}$. An algebra \textit{embedding} is an injective algebra homomorphism between algebras. When the codomain is an algebra of formal Laurent series, we will call such a map a \textit{formal embedding}. We also use this terminology for Lie algebras. We begin by examining formal embeddings in the simpler case of $\uni(1,\HH)$, and then generalize to the case of interest. 

\subsection{Holstein--Primakoff mechanisms}\label{sec:holsteinprimakoff}
In~\cite{chwdynamics}, the three-sphere $S^3 \cong \Uni(1,\HH)$ was quantized by formally embedding $\uni(1,\HH)$ into $\mathcal{W}(\heis_1)[\sqrt{\hbar}^{-1},\sqrt{\hbar}]]$. This embedding in fact dates back to work on ferromagnetism by Holstein--Primakoff~\cite{holstein}. 
\medskip

The Lie algebra $\uni(1,\HH)$
$$
[k_i,k_j]=\epsilon_{ijk}k_k
$$
admits an $\dot{\rm I}$n\"on\"u--Wigner contraction~\cite{inonuwigner} to the Heisenberg algebra. For that, we replace~$k_3$ by $-\lambda^2 \mathsf{I}$, $k_{1}$ by $\frac{\lambda}{\sqrt{2}}(\zeta_1-\zeta_2)$, and $k_{2}$ by $\frac{\lambda}{\sqrt{2}i} (\zeta_1+\zeta_2)$ and then sending the real parameter~$\lambda \to \infty$, whence
$$
[\mathsf{I} , \zeta_1] = 0 = [\zeta_{2}, \mathsf{I}] \, ,\quad
[\zeta_1 , \zeta_2] = -i \hh \mathsf{I}\, .
$$
The resulting Lie algebra is the 3-dimensional Heisenberg algebra $\heis_1$. In particular, note that the generator $k_3$ of $\uni(1,\HH)$ contracts to the central element $\mathsf{I}$. This Heisenberg algebra can be embedded into $\mathcal{W}(\heis_1)[\sqrt{\hbar}^{-1},\sqrt{\hbar}]]$ by the Lie algebra homomorphism \begin{align*}
	\mathsf{I} \mapsto i \frac{1}{\hbar} \, , \qquad \zeta_1 \mapsto -i \frac{a}{\sqrt{\hbar}} \, , \qquad  \zeta_2 \mapsto i \frac{a^\dagger}{\sqrt{\hbar}} \, \, .
\end{align*} 
Holstein and Primakoff answered the question of whether there exist expansions
$$
K_+= -i \frac{a}{\sqrt{\hbar}} + \cdots\, ,\qquad
K_0 = i \left(\frac{1}{\hbar} - 2 a^\dagger a - \frac12 \right)\, ,\qquad
K_-= i \frac{a^\dagger}{\sqrt{\hbar}} +\cdots \, ,
$$
where the $\cdots$'s denote formal power series in $\sqrt{\hbar}$ taking values in the Weyl algebra, such that
$$
[K_0,K_{\pm}]= \pm 2i K_{\pm}\, ,\qquad
 [K_+,K_-]= - i K_0\, . 
$$
Remarkably, they found the following solution~\cite{holstein}
$$
K_+=  -i \sqrt{1 - \hbar \left( a^\dagger a + \frac{1}{2}\right)}\frac{a}{\sqrt{\hbar}} \, , \qquad K_- = i \frac{a^\dagger}{\sqrt{\hbar}}\sqrt{1 - \hbar \left( a^\dagger a + \frac{1}{2}\right)}
\, .
$$
Formally expanding the square roots in powers of $\hbar$ yields an embedding of $\uni(1,\HH)$ into the ring of formal Laurent series in $\sqrt{\hbar}$ with coefficients in the Weyl algebra~$\mathcal{W}(\heis_1)$. Moreover, Holstein and Primakoff showed that, for distinguished values of $\hbar$, these formal power series lead to operators giving unitary irreducible spin representations of $\uni(1,\HH)$ on finite-dimensional subspaces of a Fock space. 
\smallskip

We want to generalize the above ``Holstein--Primakoff mechanism'' to $\uni(2,\HH)$. In that case, there is a contraction to a Heisenberg algebra corresponding to the branching rule in Equation~\eqref{br343}. For that, we introduce the (invertible) invariant tensor 
$$(\eta_{\dot{\alpha}\dot{\beta}})=\begin{pmatrix}
	0&1\\1&0
\end{pmatrix} = (\eta^{\dot{\alpha}\dot{\beta}})$$
defining the $\uni(1,\CC)$ subalgebra of $\uni(1,\HH)$ generated by $K_{\dot{+}\dot{-}} := \frac{1}{2} \eta^{\dot{\alpha}\dot{\beta}} K_{\dot{\alpha}\dot{\beta}}$. Let us denote the symmetric trace-free part of a tensor $X_{\dot{\alpha}\dot{\beta}}$ by $$X_{( \dot{\alpha} \dot{\beta})_\circ} := X_{(\dot{\alpha} \dot{\beta})} - \frac{1}{2} \eta^{\dot{\gamma} \dot{\delta}}X_{\dot{\gamma} \dot{\delta}}\, \eta_{\dot{\alpha}\dot{\beta}} \, .$$ 
A $\dot{\rm I}$n\"on\"u--Wigner contraction is performed by making the replacements
\begin{align}\label{u2contraction}
	\jmath^{\alpha \beta}:= J^{\alpha \beta},\quad \pi^\alpha_{ \dot{\alpha}}:= \lambda \hh P^\alpha_{ \dot{\alpha}},\quad \zeta_{(\dot{\alpha} \dot{\beta})_\circ} := \lambda \hh K_{( \dot{\alpha} \dot{\beta})_\circ}  \, , \quad  \zeta_{\dot{+} \dot{-}} := \mathsf{I} = \lambda^2 K_{\dot{+} \dot{-}}  \,\, .
\end{align}
Sending the real parameter $\lambda\to \infty $ yields the following non-vanishing brackets
\begin{eqnarray*}
{}&	[\jmath^{\alpha \beta},\jmath^{\gamma \delta}] =  i (\epsilon^{\alpha \gamma}\jmath^{\beta\delta}+\epsilon^{\alpha\delta}\jmath^{\beta\gamma}+\epsilon^{\beta \gamma}\jmath^{\alpha\delta}+\epsilon^{\beta \delta} \jmath^{\alpha \gamma}) \, ,&\\[1mm]
{}&	 [\jmath^{\alpha \beta},\pi^{\gamma}_{\dot{\gamma}}] = i(\epsilon^{\alpha \gamma}\pi^{\beta}_{\dot{\gamma}}+\epsilon^{\beta \gamma}\pi^{\alpha}_{\dot{\gamma}}) \, ,
 \qquad 
{}	 [\pi^\alpha_{\dot{\alpha}},\pi^\beta_{ \dot{\beta}}] =  i \epsilon^{\alpha \beta}\eta_{\dot{\alpha}\dot{\beta}}\;\mathsf{I}\, 
, &\\[1mm]
{}	&[\zeta_{(\dot{\alpha} \dot{\beta})_\circ},\zeta_{(\dot{\gamma} \dot{\delta})_\circ}] = i (\epsilon_{\dot{\alpha} \dot{\gamma}}\eta_{\dot{\beta}\dot{\delta}}+\epsilon_{\dot{\alpha}\dot{\delta}}\eta_{\dot{\beta}\dot{\gamma}}+\epsilon_{\dot{\beta} \dot{\gamma}}\eta_{\dot{\alpha} \dot{\delta}}+\epsilon_{\dot{\beta} \dot{\delta}} \eta_{\dot{\alpha} \dot{\gamma}})\;\mathsf{I} \, . &
\end{eqnarray*}
In particular, the generator $\mathsf{I}$ is central in the large $\lambda$ limit. The above algebra is the Lie algebra $\heis_3 \oplus \uni(1,\HH)$. Our goal is to find a realization of $\uni(2,\HH)$ in $\mathcal{W}^{\hbar}$ starting from a realization of the above Lie algebra $\heis_3 \oplus \uni(1,\HH)$. 
\subsection{Poisson algebra embedding}\label{sec:poissonalgebraembedding}
Before presenting our main formal embedding result in Theorem~\ref{firstembedding}, we consider a classical version of the problem. For that, we ask whether we can find functions $\mathcal{J}^{\alpha\beta}$, $ \mathcal{P}^\alpha_{ \dot{\alpha}}$, $ \mathcal{K}_{\dot{\alpha}\dot{\beta}}$ of $z$ and $z^\alpha_{\dot{\alpha}}$ taking values in $\CC^{\hbar}$ that realize the Lie algebra $\uni(2,\HH)$ as a Lie subalgebra of a Poisson algebra with brackets $\{\pdot, \pdot\}$ such that
\begin{align*}
	\{z, \bar{z}\} = -i \, , \qquad \{z^\alpha_{ \dot{\alpha}}, z^\beta_{ \dot{\beta}}\} = i \epsilon^{\alpha\beta}\eta_{\dot{\alpha}\dot{\beta}} \, \, .
\end{align*}
It is easy to realize the $\uni(1,\HH)$ subalgebra
$$	\scalebox{0.902}{$\{\mathcal{J}^{\alpha \beta},\mathcal{J}^{\gamma \delta}\} = \epsilon^{\alpha \gamma}\mathcal{J}^{\beta\delta}+\epsilon^{\alpha\delta}\mathcal{J}^{\beta\gamma}+\epsilon^{\beta \gamma}\mathcal{J}^{\alpha\delta}+\epsilon^{\beta \delta} \mathcal{J}^{\alpha \gamma}$} \, ,$$
by setting
$$ 
\mathcal{J}^{++} = -2i z^+_{\dot{-}} z^+_{\dot{+}}  \, \, , \qquad \mathcal{J}^{+-} = -i ( z^+_{\dot{+}} z^-_{\dot{-}} + z^-_{\dot{+}} z^+_{\dot{-}} ) = \mathcal{J}^{-+}  \,\, , \qquad  \mathcal{J}^{--} = -2i z^-_{\dot{+}} z^-_{\dot{-}}  \,\, .
$$
Now, we make an \textit{ansatz} for the remaining Lie algebra elements:
\begin{align*}
	\mathcal{K}_{\dot{+}\dot{+}} = -2i \frac{z}{\sqrt{\hbar}} + \cdots \, , \qquad \mathcal{K}_{\dot{+}\dot{-}} = \mathcal{K}_{\dot{-}\dot{+}} = i \frac{1}{\hbar} + \cdots\, , \qquad  \mathcal{K}_{\dot{-}\dot{-}} = 2i \frac{\bar{z}}{\sqrt{\hbar}} + \cdots \, ,
\end{align*}
and
\begin{align*}
	\mathcal{P}^\alpha_{\dot{\alpha}} = \frac{z^\alpha_{\dot{\alpha}}}{\sqrt{\hbar}} + \cdots \, ,
\end{align*}
where the $\cdots$'s denote higher order terms in $\sqrt{\hbar}$. We must now require that the above functions obey the following Poisson brackets,
\begin{eqnarray*}
	{}&	 \hspace{-0.19 cm}\scalebox{0.902}{$\{\mathcal{J}^{\alpha \beta},\mathcal{P}^\gamma_{\dot{\gamma}}\} = \epsilon^{\alpha \gamma}\mathcal{P}^\beta_{\dot{\gamma}}+\epsilon^{\beta \gamma}\mathcal{P}^\alpha_{\dot{\gamma}}, 
		\;\; 
		{}	 \{\mathcal{P}^\alpha_{\dot{\alpha}},\mathcal{P}^\beta_{\dot{\beta}}\} =  \epsilon_{\dot{\alpha} \dot{\beta}}\mathcal{J}^{\alpha \beta}+\epsilon^{\alpha \beta}\mathcal{K}_{\dot{\alpha}\dot{\beta}}\, ,
		\;\;
		{}\{\mathcal{K}_{\dot{\alpha} \dot{\beta}},\mathcal{P}^\gamma_{\dot{\gamma}}\} =  \epsilon_{\dot{\alpha} \dot{\gamma}}\mathcal{P}^\gamma_{\dot{\beta}}+\epsilon_{\dot{\beta} \dot{\gamma}}\mathcal{P}^\gamma_{\dot{\alpha}}$} \,,
	&\\[1mm]
	{}	&\scalebox{0.902}{$\{\mathcal{K}_{\dot{\alpha} \dot{\beta}},\mathcal{K}_{\dot{\gamma} \dot{\delta}}\} =  \epsilon_{\dot{\alpha} \dot{\gamma}}\mathcal{K}_{\dot{\beta}\dot{\delta}}+\epsilon_{\dot{\alpha}\dot{\delta}}\mathcal{K}_{\dot{\beta}\dot{\gamma}}+\epsilon_{\dot{\beta} \dot{\gamma}}\mathcal{K}_{\dot{\alpha} \dot{\delta}}+\epsilon_{\dot{\beta} \dot{\delta}} \mathcal{K}_{\dot{\alpha} \dot{\gamma}}$}\, .&
\end{eqnarray*}
Note that the functions $n := 2\bar{z} z$ and $N :=  z^+_{\dot{-}}z^-_{\dot{+}} - z^+_{\dot{+}}z^-_{\dot{-}} $ have vanishing Poisson brackets with the $\uni(1,\HH)$ generators $\mathcal{J}^{\alpha\beta}$. Therefore, we refine our \textit{ansatz} to read
\begin{eqnarray*}
	&\mathcal{K}_{\dot{+}\dot{+}} = - 2i e^{ i \psi} F^{\hbar}(n,N) \, z \, , \quad \mathcal{K}_{\dot{+}\dot{-}} = i \left( \frac{1}{\hbar} - n - N \right) = \mathcal{K}_{\dot{-}\dot{+}} \, , \quad\mathcal{K}_{\dot{-}\dot{-}} = 2 i \bar{z} \, e^{ -i \psi} F^{\hbar}(n,N)  \,\, ,&\\[2 mm]
	&\scalebox{0.93}{$ \hspace{-0.3 cm}\mathcal{P}^+_{\dot{+}} = - e^{ -i \phi_2} H^{\hbar}_2(n,N) z^+_{\dot{-}} \hh z \, + \, e^{ i \phi_1}H^{\hbar}_1(n,N) z^+_{\dot{+}}\, , \quad \mathcal{P}^-_{\dot{-}} =  \bar{z} \, z^-_{\dot{+}} \, e^{i \phi_2} H^{\hbar}_2(n,N)  +  z^-_{\dot{-}} e^{ - i\phi_1}\, H^{\hbar}_1(n,N) \,\, ,$}& \\[1.5 mm]
	&\scalebox{0.93}{$ \hspace{-0.4 cm} \mathcal{P}^+_{\dot{-}} =  \bar{z} \, z^+_{\dot{+}} \, e^{ i \varphi_2} G^{\hbar}_2(n,N)  + z^+_{\dot{-}} \, e^{ i \varphi_1}G^{\hbar}_1(n,N)\, , \quad \,\,\,\, \,\,\,\,\,\, \mathcal{P}^-_{\dot{+}} =  - e^{ -i \varphi_2}G^{\hbar}_2(n,N) z^-_{\dot{-}} \hh z +  e^{ -i \varphi_1}G^{\hbar}_1(n,N) \; z^-_{\dot{+}}\,\, .$}&
\end{eqnarray*} 
Here, we have made a polar decomposition such that the functions $F^{\hbar}$, $H_1^{\hbar}$, $H_2^{\hbar}$, $G_1^{\hbar}$, $G_2^{\hbar}$, $\psi$, $\phi_1$, $\phi_2$, $\varphi_1$, $\varphi_2$ are real-valued functions of $n$ and $N$.  
Computing the Poisson brackets of these remaining generators and requiring that these obey the above realization of~$\uni(2,\HH)$ yields a system of differential equations. A family of solutions thereof is given below:
\begin{eqnarray}
	&\mathcal{K}_{\dot{+}\dot{+}} = - 2i e^{ i \psi} \sqrt{\frac{1}{\hbar} - \frac{n}{2} - N} \hh z \, , \, \, \, \, \, \hh \mathcal{K}_{\dot{+}\dot{-}} = i \left( \frac{1}{\hbar} - n - N \right) = \mathcal{K}_{\dot{-}\dot{+}}\, , \, \, \, \, \, \hh \mathcal{K}_{\dot{-}\dot{-}} =  2i \bar{z} \hh e^{ -i \psi} \sqrt{\frac{1}{\hbar} - \frac{n}{2} - N}  \, ,& \nonumber \\[1 mm]
	&\mathcal{P}^+_{\dot{+}} = - e^{- i \phi_2}  z^+_{\dot{-}} \, z +   e^{ i \phi_1}\sqrt{\frac{1}{\hbar} - \frac{n}{2} - N} \, z^+_{\dot{+}} , \quad \mathcal{P}^-_{\dot{-}} =    \bar{z} \, z^-_{\dot{+}} \, e^{ i \phi_2} +  z^-_{\dot{-}} \hh e^{ - i\phi_1} \sqrt{\frac{1}{\hbar} - \frac{n}{2} - N} \,\, ,& \nonumber \\[-2 mm] \label{classicalbracketsolution} \\[-2 mm] 
	& \, \, \mathcal{P}^+_{\dot{-}} =  \bar{z} \, z^+_{\dot{+}} \, e^{ i \phi_2}     + z^+_{\dot{-}} \, e^{ -i \phi_1}\sqrt{\frac{1}{\hbar} - \frac{n}{2} - N}\, , \quad \mathcal{P}^-_{\dot{+}} =  - e^{ -i \phi_2} z^-_{\dot{-}} \, z   +  e^{ i \phi_1}\sqrt{\frac{1}{\hbar} - \frac{n}{2} - N} \; z^-_{\dot{+}}\,\, ,& \nonumber \\[1.5 mm]
	& \mathcal{J}^{++} = -2i z^+_{\dot{-}} z^+_{\dot{+}}  \,\, , \qquad
	\mathcal{J}^{+-} = -i ( z^+_{\dot{+}} z^-_{\dot{-}} + z^-_{\dot{+}} z^+_{\dot{-}} ) = \mathcal{J}^{-+}  \,\, , \qquad  \mathcal{J}^{--} = -2i z^-_{\dot{+}} z^-_{\dot{-}}  \,\, , \nonumber&
\end{eqnarray}  
where the phases obey $\psi = \phi_1 - \phi_2$ and $\partial_N \psi = 2 \hh \partial_n \phi_1$. The map 
$$ J^{\alpha\beta} \mapsto \mathcal{J}^{\alpha\beta}\, , \qquad P^\alpha_{ \dot{\alpha}} \mapsto \mathcal{P}^\alpha_{ \dot{\alpha}} \, , \qquad K_{\dot{\alpha}\dot{\beta}} \mapsto \mathcal{K}_{\dot{\alpha}\dot{\beta}}\, , $$
gives an embedding of $\uni(2,\HH)$ into a space of functions of $z$ and $z^{\alpha}_{\dot{\alpha}}$ taking values in $\CC^{\hbar}$.

We can already perform a \textit{na\"ive} quantization by first setting the phases to unity, \textit{i.e.} $\psi = \phi_1 = \phi_2 = 0$, and then defining a map
$$ z \mapsto a \, , \qquad \bar{z} \mapsto a^\dagger \, , \qquad z^\alpha_{\dot{\alpha}} \mapsto a^\alpha_{\dot{\alpha}} \, , $$
that directly replaces the classical variables, in the orders written above, with the corresponding elements of the Weyl algebra. In particular, we directly replace functions of~$n,N$ by corresponding elements in $\mathcal{W}^{\hbar}$, where $n$ and~$N$ are now the $\mathcal{W}(\heis_3)$ elements defined in Equations~\eqref{smalln} and \eqref{bign} below. In general, such a procedure is, of course, fraught~\cite{Groenewold,gotays2}. 
Remarkably it here reproduces the formal quantum solution given in Theorem \ref{firstembedding} below.

\subsection{Formal quantum embedding}\label{sec:firstembedding} We would like to first realize the $\dot{\operatorname{I}}$n\"on\"u--Wigner-contracted Lie algebra~$\heis_3 \oplus \uni(1,\HH)$ in $\mathcal{W}^{\hbar}$, and then deform this realization to that of~$\uni(2,\HH)$. To that end, we work with the embedding of $\heis_3 \oplus \uni(1,\HH)$ in $\mathcal{W}^{\hbar}$ given by the Lie algebra homomorphism
\begin{equation}
	\hspace{0.5 cm}  \scalebox{0.97}{$\mathsf{I} \mapsto  i \frac{1}{\hbar} \, , \qquad \zeta_{\dot{+} \dot{+}} \mapsto - 2 i \frac{  a}{\sqrt{\hbar}}\, , \qquad \zeta_{\dot{-} \dot{-}} \mapsto 2 i \frac{  a^\dagger}{\sqrt{\hbar}}\, , \qquad  \pi^\alpha_{\dot{\alpha}} \mapsto \frac{a^\alpha_{\dot{\alpha}}}{\sqrt{\hbar}} $} \,, \label{canonicalheis} 
	\end{equation} 
	\begin{equation}
	\scalebox{0.905}{$ \jmath^{++} \mapsto \mathcal{J}^{++} := -2i a^+_{\dot{-}} a^+_{\dot{+}}  \, , \,\,
	\jmath^{+-} = \jmath^{-+} \mapsto  \mathcal{J}^{+-} = \mathcal{J}^{-+} := -i ( a^+_{\dot{+}} a^-_{\dot{-}} + a^-_{\dot{+}} a^+_{\dot{-}} )    \, ,  \, \, \jmath^{--} \mapsto \mathcal{J}^{--} := -2i a^-_{\dot{-}} a^-_{\dot{+}}$}\,. \label{bilinearJ} 
\end{equation}
Equations~\eqref{canonicalheis} and \eqref{bilinearJ} give embeddings of $\heis_3$  and $\uni(1,\HH)$ into $\mathcal{W}^{\hbar}$, respectively. 
\smallskip

Since the bilinears in Equation~\eqref{bilinearJ} already obey $\uni(1,\HH)$, we do not deform them, and therefore ask whether we can find formal generators given by deformations
\begin{align}\label{ansatzK}
	\mathcal{K}_{\dot{+}\dot{+}} = -2i \frac{a}{\sqrt{\hbar}} + \cdots \, , \qquad \mathcal{K}_{\dot{+}\dot{-}} = \mathcal{K}_{\dot{-}\dot{+}} = i \frac{1}{\hbar} + \cdots\, , \qquad  \mathcal{K}_{\dot{-}\dot{-}} = 2i \frac{a^\dagger}{\sqrt{\hbar}} + \cdots \, ,
\end{align}
and
\begin{align}\label{ansatzP}
	\mathcal{P}^\alpha_{\dot{\alpha}} = \frac{a^\alpha_{\dot{\alpha}}}{\sqrt{\hbar}} + \cdots \, ,
\end{align}
in $\mathcal{W}^{\hbar}$ that realize $\uni(2,\HH)$. Here, the deformations denoted by $\cdots$'s are formal power series in $\sqrt{\hbar}$ taking values in $\mathcal{W}(\heis_3)$.  
\smallskip

\smallskip

\subsubsection{Gradings}
Certain gradings of the Lie algebra $\uni(2,\HH)$ will be particularly important for constructing the formal generators. 
Given a Lie algebra $\frg$ and a positive integer~$k$, we say that $D \in \frg$ gives a $|k|$-grading of~$\frg$ if we can write a direct sum decomposition $$ \frg = \frg_{-k} \oplus \cdots \oplus \frg_{-1} \oplus \frg_0 \oplus \frg_1 \oplus \cdots \oplus \frg_{k} $$
such that each $\frg_l$ above is an eigenspace of $D$ with eigenvalue $l$ under the adjoint action. We will use the same terminology for a Lie algebra $\frg \oplus \langle D \rangle$ even when the \textit{grading element}~$D \notin \frg$.

When performing the $\dot{\operatorname{I}}$n\"on\"u--Wigner contraction of Equation~\eqref{u2contraction}, we decomposed one of the direct summands~$\uni(1,\HH)$ in Equation \eqref{br343} into a direct sum of three 1-dimensional vector spaces using the $|1|$-grading
	\begin{align}
		\uni(1,\HH) = \langle K_{\dot{-}\dot{-}} \rangle  \oplus \uni(1,\CC) \oplus \langle K_{\dot{+}\dot{+}} \rangle  \label{1grading}	\, ,
	\end{align} 
with grading element $\frac{-iK_{\dot{+}\dot{-}}}{2}$. The entire Lie algebra has a $|2|$-grading
	\begin{align}
		\uni(2,\HH) = \langle K_{\dot{-}\dot{-}} \rangle \oplus \langle P^+_{\dot{-}}, P^-_{\dot{-}} \rangle  \oplus \underbrace{\left( \langle J^{\alpha \beta} \rangle \oplus \langle K_{\dot{+}\dot{-}} \rangle \right)}_{\uni(1,\HH) \oplus  \uni(1,\CC) }  \oplus  \langle P^+_{\dot{+}}, P^-_{\dot{+}} \rangle \oplus \langle K_{\dot{+}\dot{+}} \rangle  \label{3grading} 
	\, ,
	\end{align} 
with grading element $-iK_{\dot{+}\dot{-}}$.  
As $\lambda \to \infty$, the $\uni(1,\HH)$ of Equation~\eqref{1grading} contracts to
\begin{align*}
	\heis_1 = \langle \zeta_{\dot{-} \dot{-}} \rangle \oplus \langle \mathsf{I} \rangle \oplus \langle \zeta_{\dot{+} \dot{+}} \rangle \, .
\end{align*} 
In particular, the element $K_{\dot{+}\dot{-}}$ contracts to the central element $\mathsf{I}$. Recall that the entire Lie algebra $\uni(2,\HH)$ contracts to $\heis_3 \oplus \uni(1,\HH)$. Thus, when $\lambda \to \infty$, Equation \eqref{3grading} becomes 
\begin{align*}
	\uni(1,\HH) \oplus \heis_3 = \langle \zeta_{\dot{-} \dot{-}} \rangle \oplus \langle \pi^+_{\dot{-}}, \pi^-_{\dot{-}} \rangle  \oplus \underbrace{\left( \langle \jmath^{\alpha \beta} \rangle \oplus \langle \mathsf{I} \rangle \right)}_{\uni(1,\HH) \oplus \langle \mathsf{I} \rangle}  \oplus  \langle \pi^+_{\dot{+}}, \pi^-_{\dot{+}} \rangle \oplus  \langle  \zeta_{\dot{+} \dot{+}} \rangle \, .
\end{align*}

In $\mathcal{W}(\heis_3)$, the \textit{number operator} 
	\begin{align}\label{smalln}
		n := -\frac{1}{4}\xi_{\dot{\alpha} \dot{\beta}} \epsilon^{\dot{\alpha} \dot{\gamma}} \xi_{\dot{\gamma} \dot{\delta}}   \epsilon^{\dot{\delta} \dot{\beta}} - 1 = 2a^\dagger a \, .
	\end{align} 
Here, the $\eta$ trace-free tensor 
$\xi_{\dot{\alpha} \dot{\beta}} := \sqrt{\hbar} \, \zeta_{(\dot{\alpha} \dot{\beta})_\circ}$.
The operator $-n$ gives a $|2|$-grading of~$\heis_1$. Moreover, the \textit{total number operator} of the subalgebra $\heis_3$ of $\uni(1,\HH) \oplus \heis_3$ is given by
\begin{align}\label{bign}
	\mathcal{N} := n + N \, , \qquad \mbox{with} \quad 
		N := \frac{\hbar}{2} \, \pi^\alpha_{\dot{\alpha}} \epsilon^{\dot{\alpha} \dot{\beta}} \epsilon_{\alpha \beta} \pi^\beta_{\dot{\beta}} = a^+_{\dot{-}}a^-_{\dot{+}}-a^+_{\dot{+}}a^-_{\dot{-}}  \, \, .
\end{align}
The operator $-\mathcal{N}$ gives $\uni(1,\HH) \oplus \heis_3$ a $|2|$-grading. Moreover, $-N$ gives $\heis_2$ the $|1|$-grading $$ \heis_2 = \langle \pi^+_{\dot{-}}, \pi^-_{\dot{-}} \rangle  \oplus  \langle \mathsf{I} \rangle  \oplus  \langle \pi^+_{\dot{+}}, \pi^-_{\dot{+}} \rangle \, .$$ 

\subsubsection{Embedding}We would like to label particular elements of $\mathcal{W}^{\hbar}$ by meromorphic functions in $\sqrt{\hbar}$. More generally, a formal Laurent series $f^{\hbar}(x) := \sum_{k=M}^\infty c_k(x) \sqrt{\hbar}^k$ in~$\sqrt{\hbar}$, where $M \in \mathbb{Z}$ and $c_k(x)$ are polynomials in $x$, together with some $A \in \mathcal{W}(\heis_3)$ determines an element 
\begin{align}\label{meromorphicseries}
	f^{\hbar}(A) = \sum_{k=M}^\infty c_k(A) \sqrt{\hbar}^k \in \mathcal{W}^{\hbar}\, \, .
\end{align}
Elements of $\mathcal{W}^{\hbar}$ that can be expressed in the above form shall be called \textit{polymeromorphic}. For example, \begin{align}\label{squarerootlaurent}
	f^{\hbar}\left(N + \frac{n}{2}\right) = \sqrt{\frac{1}{\hbar} - N - \frac{n}{2}} \, \, ,
\end{align} denotes
\begin{align}\label{squarerootlaurentseries}
	 \frac{1}{\sqrt{\hbar}} \sum_{k=0}^{\infty} b_{k}\big(N + \tfrac{n}{2}\big) \, \hbar^k \, ,
\end{align} 
where
\begin{align*}
	 b_{k}(x)  := (-1)^k \frac{\prod_{l=0}^{k-1} \left(1 - 2l\right)}{2^k k!} x^k \, .
\end{align*}
Note that in this case $\sqrt{\frac{1}{\hbar} + x}$ is a \textit{bona fide} meromorphic function of $\sqrt{\hbar}$ around $\hbar = 0$. The polymeromorphic notion generalizes to formal Laurent series $f^{\hbar}(x,y,\dots, z)$ of many variables in the obvious way.

Multiplying a polymeromorphic element by any element in $\mathcal{W}^{\hbar}$ is well-defined. In particular, we have the following product properties.
\begin{lemma}\label{passagerules}
	Let $f^{\hbar}(n,N) \in \mathcal{W}^{\hbar}$ be polymeromorphic. Then,
	\begin{align*}
		f^{\hbar}(n,N) \hh a \, &= a f^{\hbar}(n-2,N) \, , \qquad \, \, \, \, \hh f^{\hbar}(n,N) a^\dagger = a^\dagger f^{\hbar}(n+2,N) \, , \\ 
		f^{\hbar}(n,N) a^+_{ \dot{+}} &= a^+_{ \dot{+}} f^{\hbar}(n,N-1) \, , \qquad f^{\hbar}(n,N) a^+_{ \dot{-}} = a^+_{ \dot{-}} f^{\hbar}(n,N+1) \, , \\ 
		f^{\hbar}(n,N) a^-_{ \dot{+}} &= a^-_{ \dot{+}} f^{\hbar}(n,N-1) \, , \qquad f^{\hbar}(n,N) a^-_{ \dot{-}} = a^-_{ \dot{-}} f^{\hbar}(n,N+1)  \, . 
	\end{align*}
\end{lemma}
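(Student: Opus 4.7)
The plan is to reduce the claim to a few basic commutators in $\mathcal{W}(\heis_3)$ and then bootstrap to polymeromorphic elements. Observe first that $n = 2 a^\dagger a$ lies entirely in the subalgebra of $\mathcal{W}(\heis_3)$ generated by $a$ and $a^\dagger$, while $N = a^+_{\dot{-}} a^-_{\dot{+}} - a^+_{\dot{+}} a^-_{\dot{-}}$ lies in the subalgebra generated by the $a^\alpha_{\dot{\alpha}}$. These two subalgebras commute, so we automatically have $[n, N] = 0$, $[n, a^\alpha_{\dot{\alpha}}] = 0$, and $[N, a] = [N, a^\dagger] = 0$. Consequently each shift in the statement needs to be checked against either $n$ or $N$, never both at once.

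The only nontrivial commutators are computed directly from the canonical relations $[a, a^\dagger] = 1$ and $[a^\alpha_{\dot{\alpha}}, a^\beta_{\dot{\beta}}] = -\epsilon^{\alpha\beta} \eta_{\dot{\alpha}\dot{\beta}}$. A one-line calculation gives $[n, a] = -2 a$ and $[n, a^\dagger] = 2 a^\dagger$, which rearrange to $n a = a(n - 2)$ and $n a^\dagger = a^\dagger(n + 2)$. Using the explicit values of $\epsilon$ and $\eta$ fixed in the excerpt, the analogous computation yields $[N, a^\alpha_{\dot{+}}] = -a^\alpha_{\dot{+}}$ and $[N, a^\alpha_{\dot{-}}] = + a^\alpha_{\dot{-}}$ for $\alpha \in \{+, -\}$, equivalent to $N a^\alpha_{\dot{+}} = a^\alpha_{\dot{+}}(N - 1)$ and $N a^\alpha_{\dot{-}} = a^\alpha_{\dot{-}}(N + 1)$. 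These are precisely the shifts claimed at the linear level.

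Next I would promote these linear identities to arbitrary polynomials. A trivial induction on degree gives $n^k a = a(n - 2)^k$ and analogous relations for $a^\dagger$ and the $a^\alpha_{\dot{\alpha}}$, so by $\CC$-linearity $p(n) X = X \hh p(n + s_n)$ and $q(N) X = X \hh q(N + s_N)$ for all univariate polynomials $p, q$ and the integer shifts $s_n, s_N$ determined by $X$ in the previous step. Because $n$ and $N$ commute, every bivariate polynomial $r(n, N)$ factors into commuting univariate factors, and thus $r(n, N) \hh X = X \hh r(n + s_n,\, N + s_N)$.

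Finally I would pass from polynomials to polymeromorphic elements. By definition \eqref{meromorphicseries}, $f^{\hbar}(n, N) = \sum_{k \ge M} c_k(n, N) \sqrt{\hbar}^k$ with polynomial coefficients $c_k$, and multiplication of such a series by any element of $\mathcal{W}^{\hbar}$ is well defined termwise in $\sqrt{\hbar}$. Applying the polynomial identity to each $c_k$ and reassembling the series yields the six displayed formulas. I do not anticipate a genuine obstacle here: the argument is essentially careful bookkeeping, and the only place where one must pay attention is tracking which dotted index on $a^\alpha_{\dot{\alpha}}$ determines the sign of the $N$-shift (the undotted index plays no role).
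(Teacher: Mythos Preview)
Your proposal is correct and follows essentially the same route as the paper: establish the basic shift relations $n\,a = a(n-2)$ etc.\ from the canonical commutators, extend to monomials $n^j N^l$ by induction (using $[n,N]=0$), pass to arbitrary polynomial coefficients $c_k(n,N)$ by linearity, and then to the full polymeromorphic series termwise in $\sqrt{\hbar}$. The paper's proof is slightly terser (it treats only the case of $a$ and invokes \emph{mutatis mutandis} for the rest), whereas you compute all six basic commutators explicitly; but the argument is the same.
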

\begin{proof}
	We prove the first of these properties, the rest follow \textit{mutatis mutandis}. By definition,
	$$f^{\hbar}(n,N) = \sum_{k=M}^\infty c_k(n,N) \sqrt{\hbar}^k \, \, ,$$
	for some integer $M$ and some polynomial $c_k(n,N)$ in $n$ and $N$. Therefore, we can express it unambiguously in the form $$c_k(n,N) = \sum_{j=0}^{m_1} \sum_{l=0}^{m_2} b_{jl} n^j N^l \, \, , \quad b_{jl} \in \CC \, ,$$
	since $n$ and $N$ commute. Note that
	$$ n^j a = a (n-2)^j \, .$$
	Thus, $$c_k(n, N) a = a \hh c_k(n-2,N)$$ and hence
	$$f^{\hbar}(n,N) a = a f^{\hbar}(n-2,N) \, .$$ 
\end{proof}
We will need to impose reality conditions on polymeromorphic elements. We call $f^{\hbar} \in \mathcal{W}^{\hbar}$ \textit{real polymeromorphic} if 
$$ (f^{\hbar})^\dagger = f^{\hbar} \, .$$ 

\smallskip
Now we can realize $\uni(2,\HH)$ in $\mathcal{W}^{\hbar}$. Let $\frg$ and $\mathfrak{k}$ be Lie algebras equipped with respective conjugate linear involutive automorphisms $\sigma_1$ and $\sigma_2$. We say that an embedding $\phi$ of~$\frg$ into $\mathfrak{k}$ is \textit{reality preserving} if the embedding intertwines $\sigma_1$ and $\sigma_2$: 
$$ \phi \circ \sigma_1 = \sigma_2 \circ \phi \, .$$
A reality preserving embedding maps fixed points of $\sigma_1$ to fixed points of $\sigma_2$, or in other words preserves real forms. The embeddings given in Equations \eqref{canonicalheis} and \eqref{bilinearJ} are reality preserving.
\medskip


\begin{theorem}\label{firstembedding}
	A reality preserving embedding of the Lie algebra $\uni(2,\HH)$ into $\mathcal{W}^{\hbar}$ is given by \begin{align*}
		J^{\alpha\beta} \mapsto	\mathcal{J}^{\alpha\beta} \, , \qquad  P^\alpha_{\dot{\alpha}} \mapsto \mathcal{P}^\alpha_{\dot{\alpha}} \, , \qquad K_{\dot{\alpha}\dot{\beta}} \mapsto \mathcal{K}_{\dot{\alpha}\dot{\beta}} \,\, ,
	\end{align*} 
	where
	\begin{eqnarray*}
		&\mathcal{K}_{\dot{+}\dot{+}} = - 2i \sqrt{\frac{1}{\hbar} - N - \frac{n}{2}} \; a \,\, , \qquad \mathcal{K}_{\dot{+}\dot{-}} = i \left( \frac{1}{\hbar} - N - n \right) = \mathcal{K}_{\dot{-}\dot{+}} \,\, , \qquad\mathcal{K}_{\dot{-}\dot{-}} =  2i a^{\dagger} \sqrt{\frac{1}{\hbar} - N - \frac{n}{2}} \,\, , &\\[1.5 mm]
		&\mathcal{P}^+_{\dot{+}} = - a^+_{\dot{-}}  \,  a   +  \sqrt{\frac{1}{\hbar} - N - \frac{n}{2}} \; a^+_{\dot{+}}\,\, , \qquad \mathcal{P}^-_{\dot{-}} =  a^\dagger \, a^-_{\dot{+}}   + a^-_{\dot{-}} \sqrt{\frac{1}{\hbar} - N - \frac{n}{2}}  \,\, ,& \\[1.5 mm]
		&\mathcal{P}^+_{\dot{-}} =  a^\dagger \, a^+_{\dot{+}}   + a^+_{\dot{-}} \sqrt{\frac{1}{\hbar} - N - \frac{n}{2}}\,\, , \qquad \mathcal{P}^-_{\dot{+}} =  - a^-_{\dot{-}} \, a    +  \sqrt{\frac{1}{\hbar} - N - \frac{n}{2}} \; a^-_{\dot{+}}\,\, ,& \\[1.5 mm]
		& \mathcal{J}^{++} = -2i a^+_{\dot{-}} a^+_{\dot{+}}  \,\, , \qquad
		 \mathcal{J}^{+-} = -i ( a^+_{\dot{+}} a^-_{\dot{-}} + a^-_{\dot{+}} a^+_{\dot{-}} ) = \mathcal{J}^{-+}  \,\, , \qquad  \mathcal{J}^{--} = -2i a^-_{\dot{+}} a^-_{\dot{-}}  \,\, .&  
	\end{eqnarray*} 
\end{theorem}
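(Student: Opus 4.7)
I would establish the theorem by verifying three properties of the proposed map $\phi:\uni(2,\HH)\to\mathcal{W}^{\hbar}$: (a) $\phi$ extends to a Lie algebra homomorphism; (b) $\phi$ intertwines the two dagger involutions; and (c) $\phi$ is injective. Property (c) is immediate: since the complexification $\sp(4,\CC)$ of $\uni(2,\HH)$ is simple, $\uni(2,\HH)$ has no proper nonzero ideals, and since for example $\phi(J^{++}) = -2i\,a^+_{\dot{-}}a^+_{\dot{+}} \neq 0$, the kernel of $\phi$ vanishes. Property (b) reduces to the self-adjointness of $n = 2a^\dagger a$ and of $N = a^+_{\dot{-}}a^-_{\dot{+}} - a^+_{\dot{+}}a^-_{\dot{-}}$ under $\dagger$, which is a direct consequence of $(a)^\dagger = a^\dagger$, $(a^+_{\dot{+}})^\dagger = -a^-_{\dot{-}}$, $(a^+_{\dot{-}})^\dagger = a^-_{\dot{+}}$ together with antimultiplicativity. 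The argument $\tfrac{1}{\hbar}-N-\tfrac{n}{2}$ of the formal square root \eqref{squarerootlaurentseries} is therefore self-adjoint, hence so is the square root; the reality relations \eqref{spinorreality} for the image then follow termwise from the formulas displayed in the statement.

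For property (a), I would partition the defining brackets of $\uni(2,\HH)$ into five blocks and verify each. The $[\mathcal{J},\mathcal{J}]$ block is the standard statement that the bilinears of Equation~\eqref{bilinearJ} realize $\uni(1,\HH)$ in $\mathcal{W}(\heis_3)$. For $[\mathcal{J},\mathcal{P}]$, the decisive observation is that both $n$ and $N$ commute with every $\mathcal{J}^{\alpha\beta}$ ($n$ because $a,a^\dagger$ lie in a disjoint Heisenberg factor from $a^\alpha_{\dot\alpha}$; $N$ because it is a $\uni(1,\HH)$-scalar bilinear), so the polymeromorphic square root commutes with each $\mathcal{J}^{\alpha\beta}$ termwise. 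The bracket therefore collapses to the tensorial identity $[\mathcal{J}^{\alpha\beta},a^\gamma_{\dot\gamma}] = i(\epsilon^{\alpha\gamma}a^\beta_{\dot\gamma}+\epsilon^{\beta\gamma}a^\alpha_{\dot\gamma})$ multiplied by the appropriate square root, matching the expected output. The $[\mathcal{K},\mathcal{K}]$ and $[\mathcal{K},\mathcal{P}]$ blocks are verified using Lemma~\ref{passagerules} in the form $\sqrt{\tfrac{1}{\hbar}-N-\tfrac{n}{2}}\,a = a\sqrt{\tfrac{1}{\hbar}-N-\tfrac{n}{2}+1}$ and its companions; as a representative calculation,
\begin{align*}
[\mathcal{K}_{\dot{+}\dot{+}},\mathcal{K}_{\dot{-}\dot{-}}]
&= 4\,\sqrt{\tfrac{1}{\hbar}-N-\tfrac{n}{2}}\,aa^\dagger\sqrt{\tfrac{1}{\hbar}-N-\tfrac{n}{2}} - 4\,a^\dagger\big(\tfrac{1}{\hbar}-N-\tfrac{n}{2}\big)a \\
&= 4\big(\tfrac{n}{2}+1\big)\big(\tfrac{1}{\hbar}-N-\tfrac{n}{2}\big) - 4\big(\tfrac{1}{\hbar}-N-\tfrac{n}{2}+1\big)\tfrac{n}{2}
= 4\big(\tfrac{1}{\hbar}-N-n\big),
\end{align*}
which is precisely $-4i\,\mathcal{K}_{\dot{+}\dot{-}}$, matching $[K_{\dot{+}\dot{+}},K_{\dot{-}\dot{-}}]$; the other $[\mathcal{K},\mathcal{K}]$ and $[\mathcal{K},\mathcal{P}]$ brackets follow the same template.

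The main obstacle is the $[\mathcal{P},\mathcal{P}]$ block, whose target $i(\epsilon_{\dot\alpha\dot\beta}\mathcal{J}^{\alpha\beta} + \epsilon^{\alpha\beta}\mathcal{K}_{\dot\alpha\dot\beta})$ simultaneously involves both previously realized families. Each $\mathcal{P}^\alpha_{\dot\alpha}$ has a two-term polar decomposition, so each bracket expands into four cross-products; Lemma~\ref{passagerules} then turns these into monomials in $a,a^\dagger,a^\alpha_{\dot\alpha}$ times shifted square roots, whose squares collapse to polynomials in $n$ and $N$. These polynomial contributions must reassemble into $\mathcal{K}_{\dot\alpha\dot\beta}$, while the pure bilinears in the $a^\alpha_{\dot\alpha}$ must reassemble into $\mathcal{J}^{\alpha\beta}$. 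A concrete roadmap is the classical Poisson-bracket calculation carried out in Subsection~\ref{sec:poissonalgebraembedding}: as noted immediately after Equation~\eqref{classicalbracketsolution}, setting the phases to unity and replacing commuting variables with their Weyl counterparts in the order prescribed there reproduces exactly the present quantum formulas, so the classical bracket identities serve as a checkable template. The only genuinely quantum effect is the appearance of integer shifts dictated by Lemma~\ref{passagerules}, and showing that these shifts cancel in the required combinations---so that the polynomial residues exactly reproduce $\mathcal{K}_{\dot\alpha\dot\beta}$ rather than any deformed variant---is the crux of the argument.
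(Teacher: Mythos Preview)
Your proposal is correct and proceeds by direct verification, which the paper explicitly acknowledges as a valid route (``In principle one can perform this verification directly, but this is rather tedious''). The paper, however, chooses a different presentation: rather than checking all brackets of the given formulas, it \emph{derives} them via a sequence of ans\"atze. Starting from the fixed $\mathcal{J}^{\alpha\beta}$ and the choice $\mathcal{K}_{\dot{+}\dot{-}} = i(\tfrac{1}{\hbar}-\mathcal{N})$, the paper uses the $|2|$-grading by $-i\mathcal{K}_{\dot{+}\dot{-}}$ and the reality conditions to constrain $\mathcal{K}_{(\dot\alpha\dot\beta)_\circ}$ and $\mathcal{P}^\alpha_{\dot\alpha}$ to depend on undetermined real polymeromorphic functions $F^\hbar,G_1^\hbar,G_2^\hbar,H_1^\hbar,H_2^\hbar$ of $n,N$; the $[\mathcal{J},\mathcal{P}]$ brackets then force $G_i^\hbar=H_i^\hbar$ and $H_2^\hbar=1$, and the $[\mathcal{P},\mathcal{P}]$ brackets yield the two relations $H_1^\hbar(n,N)^2=\tfrac{1}{\hbar}-\tfrac{n}{2}-N$ and a recursion tying $F^\hbar$ to $H_1^\hbar$, whose solution is the displayed square root. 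Your approach establishes the theorem with no auxiliary unknowns and makes the role of Lemma~\ref{passagerules} very transparent (your sample $[\mathcal{K}_{\dot{+}\dot{+}},\mathcal{K}_{\dot{-}\dot{-}}]$ computation is clean and correct); the paper's approach buys an explanation of \emph{why} the formulas take the shape they do and connects naturally to the classical Poisson calculation of Subsection~\ref{sec:poissonalgebraembedding}. Your injectivity argument via simplicity of $\uni(2,\HH)$ is a nice addition that the paper leaves implicit.
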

\begin{proof}
	It suffices to verify that the generators given in the theorem satisfy the commutation relations
	\begin{eqnarray*}
		{}&	\scalebox{0.902}{$[\mathcal{J}^{\alpha \beta},\mathcal{J}^{\gamma \delta}] = i( \epsilon^{\alpha \gamma}\mathcal{J}^{\beta\delta}+\epsilon^{\alpha\delta}\mathcal{J}^{\beta\gamma}+\epsilon^{\beta \gamma}\mathcal{J}^{\alpha\delta}+\epsilon^{\beta \delta} \mathcal{J}^{\alpha \gamma})$} ,&\\[1mm]
		{}&	 \hspace{-0.19 cm}\scalebox{0.902}{$[\mathcal{J}^{\alpha \beta},\mathcal{P}^\gamma_{\dot{\gamma}}] = i( \epsilon^{\alpha \gamma}\mathcal{P}^\beta_{\dot{\gamma}}+\epsilon^{\beta \gamma}\mathcal{P}^\alpha_{\dot{\gamma}}), 
			\;\; 
			{}	 [\mathcal{P}^\alpha_{\dot{\alpha}},\mathcal{P}^\beta_{\dot{\beta}}] = i(  \epsilon_{\dot{\alpha} \dot{\beta}}\mathcal{J}^{\alpha \beta}+\epsilon^{\alpha \beta}\mathcal{K}_{\dot{\alpha}\dot{\beta}})\, ,
			\;\;
			{}[\mathcal{K}_{\dot{\alpha} \dot{\beta}},\mathcal{P}^\gamma_{\dot{\gamma}}] = i( \epsilon_{\dot{\alpha} \dot{\gamma}}\mathcal{P}^\gamma_{\dot{\beta}}+\epsilon_{\dot{\beta} \dot{\gamma}}\mathcal{P}^\gamma_{\dot{\alpha}})$} \,,
		 &\\[1mm]
		{}	&\scalebox{0.902}{$[\mathcal{K}_{\dot{\alpha} \dot{\beta}},\mathcal{K}_{\dot{\gamma} \dot{\delta}}] =  i(\epsilon_{\dot{\alpha} \dot{\gamma}}\mathcal{K}_{\dot{\beta}\dot{\delta}}+\epsilon_{\dot{\alpha}\dot{\delta}}\mathcal{K}_{\dot{\beta}\dot{\gamma}}+\epsilon_{\dot{\beta} \dot{\gamma}}\mathcal{K}_{\dot{\alpha} \dot{\delta}}+\epsilon_{\dot{\beta} \dot{\delta}} \mathcal{K}_{\dot{\alpha} \dot{\gamma}})$}\, .&
	\end{eqnarray*}
	In principle one can perform this verification directly, but this is rather tedious. We therefore outline a series of \textit{ans\"atze} built from polymeromorphic elements that produce the above embedding.
	
	We are tasked with finding the $\cdots$'s in Equations~\eqref{ansatzK} and~\eqref{ansatzP}. First, notice that the $|2|$-grading element $-iK_{\dot{+}\dot{-}}$ of $\uni(2,\HH)$ is mapped to $-i\mathcal{K}_{\dot{+}\dot{-}}$ in $\mathcal{W}^{\hbar}$. Moreover, recall that the operator $-\mathcal{N}$ gives a $|2|$-grading of $\uni(1,\HH) \oplus \heis_3$.  Therefore, we begin by adding~$-i\mathcal{N}$ to the central element $\frac{i}{\hbar}$ and consider the new ansatz $$ \mathcal{K}_{\dot{+}\dot{-}} = i \left( \frac{1}{\hbar} - \mathcal{N} \right) = \mathcal{K}_{\dot{-}\dot{+}} \,  \, .$$
	This will restrict the remaining generators considerably.
	
	Note that, with respect to the grading element $-i\mathcal{K}_{\dot{+}\dot{-}}$, the undeformed parts of $\mathcal{K}_{\dot{+}\dot{+}}$ and $\mathcal{K}_{\dot{-}\dot{-}}$ given by the images of $\zeta_{\dot{-}\dot{-}}$ and~$\zeta_{\dot{+}\dot{+}}$ (see Equation~\eqref{canonicalheis}) have grades $-2$ and $2$, respectively. Similarly, with respect to $-\mathcal{N}$, $a^\dagger$ and $a$ have respective grades $-2$ and $2$, and commute with~$\mathcal{J}^{\alpha\beta}$. Therefore, we make the further ansatz
	\begin{align*}
		\mathcal{K}_{\dot{+}\dot{+}} = - 2 i  F^{\hbar}(n,N) \, a \,\, , \qquad\mathcal{K}_{\dot{-}\dot{-}} =  2 i a^\dagger \, F^{\hbar}(n,N)  \,\, ,
	\end{align*}
	where we require $F^{\hbar}$ to be \textit{real} polymeromorphic in order to satisfy $(\mathcal{K}_{\dot{+}\dot{+}})^\dagger = \mathcal{K}_{\dot{-}\dot{-}}$.
	
	The generators $\mathcal{P}^\alpha_{\dot{\alpha}}$ must have grade $-1$ or $1$, with respect to $-i\mathcal{K}_{\dot{+}\dot{-}}$, and we have the following gradings:
	\begin{align*}
		\text{Grade -1: } \, a^-_{\dot{-}} \, , \, a^+_{\dot{-}}\, , \, a^\dagger a^-_{\dot{+}} \, , \, a^\dagger a^+_{\dot{+}} \, , \qquad \qquad  \text{Grade 1: } \, a^+_{\dot{+}} \, , \, a^-_{\dot{+}}\, ,\, a a^+_{\dot{-}} \, , \, a a^-_{\dot{-}} \, .
	\end{align*}
	In addition, the above combinations can be organized into doublet representations of~$\mathcal{J}^{\alpha\beta}$ and $\mathcal{K}_{\dot{\alpha}\dot{\beta}}$. This leads to our next ansatz.
\begin{eqnarray*}
	&\mathcal{P}^+_{\dot{+}} = -  a^+_{\dot{-}} \, H^{\hbar}_2(n,N) \, a    +  H^{\hbar}_1(n,N) \, a^+_{\dot{+}}\,\, , \qquad \mathcal{P}^-_{\dot{-}} =  a^\dagger \, H^{\hbar}_2(n,N) \, a^-_{\dot{+}}  +  a^-_{\dot{-}} \, H^{\hbar}_1(n,N) \,\, ,& \\[1.5 mm]
	&\mathcal{P}^+_{\dot{-}} =  a^\dagger \,  \, G^{\hbar}_2(n,N) \, a^+_{\dot{+}}  + a^+_{\dot{-}} \, G^{\hbar}_1(n,N)\,\, , \qquad \mathcal{P}^-_{\dot{+}} =  -  a^-_{\dot{-}}  \, G^{\hbar}_2(n,N) \, a  +  G^{\hbar}_1(n,N) \, a^-_{\dot{+}}\,\, .& 
\end{eqnarray*} 
	For the reality conditions $$(\mathcal{P}^+_{\dot{+}})^\dagger =  - \mathcal{P}^-_{\dot{-}} \, , \qquad (\mathcal{P}^+_{\dot{-}})^\dagger =   \mathcal{P}^-_{\dot{+}} \, ,$$
	it suffices to require that $G^{\hbar}_1,G^{\hbar}_2,H^{\hbar}_1,H^{\hbar}_2$ are \textit{real} polymeromorphic. Motivated by the classical solution~\eqref{classicalbracketsolution} with phases set to unity, we refine our ansatz to satisfy $$H_2^{\hbar} = 1 \, .$$ 
	We can now compute the Lie brackets of our \textit{ans\"atze} to obtain relations between the polymeromorphic elements $F^{\hbar}, G^{\hbar}_1, G^{\hbar}_2, H^{\hbar}_1$. 
	 Since $a,a^\dagger,a^\alpha_{ \dot{\alpha}}$ are independent, we can read off relations from their coefficients. The brackets of $\mathcal{J}^{\alpha\beta}$ with $\mathcal{P}^\alpha_{ \dot{\alpha}}$ imply that
	$$ G_1^{\hbar} = H_1^{\hbar} \, \, , \qquad G_2^{\hbar} = H_2^{\hbar} = 1  \, \, .$$ Then, examining independent combinations of $a,a^\dagger,a^\alpha_{ \dot{\alpha}}$ in the six possible brackets of~$\mathcal{P}^{\alpha}_{\dot{\alpha}}$ with $\mathcal{P}^{\beta}_{\dot{\beta}}$, we find
	\begin{eqnarray*}
		&H_1^{\hbar}(n,N)^2 = \frac{1}{\hbar} - \frac{n}{2} - N \, , &\\[1mm]
		 &(1-N) H_1^{\hbar}(n+2,N-1) + (1+N) H_1^{\hbar}(n,N) =  2 \hh F^{\hbar}(n,N) \, .&
	\end{eqnarray*}
	Setting $$H_1^{\hbar} = \sqrt{\frac{1}{\hbar} - N - \frac{n}{2}} \, ,$$ so that in turn $$F^{\hbar} = H_1^{\hbar}\, ,$$ gives the quoted solution. It is easy to check that all remaining brackets are satisfied.
\end{proof}

\begin{remark}
	The above theorem does not address uniqueness. Indeed, on the basis of reality alone, it is at least possible to allow certain phase factors. For example, let $\psi,\phi_1,\phi_2$ be polynomials in $n,N$. Then 
\begin{eqnarray*}
	&\mathcal{K}_{\dot{+}\dot{+}} = - 2i e^{ i \psi} F^{\hbar}(n,N) \, a \, , \quad \mathcal{K}_{\dot{+}\dot{-}} = i \left( \frac{1}{\hbar} - n - N \right) = \mathcal{K}_{\dot{-}\dot{+}} \, , \quad\mathcal{K}_{\dot{-}\dot{-}} = 2 i a^\dagger \, e^{ -i \psi} F^{\hbar}(n,N)  \,\, ,&\\[1 mm]
	&\scalebox{0.92}{$\mathcal{P}^+_{\dot{+}} = - e^{ -i \phi_2} H^{\hbar}_2(n,N) a^+_{\dot{-}} \hh a \, + \, e^{ i \phi_1}H^{\hbar}_1(n,N) a^+_{\dot{+}}\, , \quad \mathcal{P}^-_{\dot{-}} =  a^\dagger \, a^-_{\dot{+}} \, e^{i \phi_2} H^{\hbar}_2(n,N)  +  a^-_{\dot{-}} e^{ - i\phi_1}\, H^{\hbar}_1(n,N) \,\, ,$}& \\[1.5 mm]
	&\scalebox{0.92}{$\mathcal{P}^+_{\dot{-}} =  a^\dagger \, a^+_{\dot{+}} \, e^{ i \varphi_2} G^{\hbar}_2(n,N)  + a^+_{\dot{-}} \, e^{ i \varphi_1}G^{\hbar}_1(n,N)\, , \quad \mathcal{P}^-_{\dot{+}} =  - e^{ -i \varphi_2}G^{\hbar}_2(n,N) a^-_{\dot{-}} \hh a +  e^{ -i \varphi_1}G^{\hbar}_1(n,N) \; a^-_{\dot{+}}\,\, ,$}&
\end{eqnarray*} 
also obey the reality conditions \eqref{spinorreality}. These phase factors were set to unity in the solution given in the theorem above, but were incorporated into the classical solution given in Subsection~\ref{sec:poissonalgebraembedding}. \hfill $\blacklozenge$
\end{remark}

Finally, we would like to emphasize that while the square root expressions appearing in Theorem~\ref{firstembedding} can be used to define operators on an appropriate Fock space, they in general do not have the desired antihermiticity properties required to construct a connection of a quantum dynamical system.
	\section{Beyond formal embeddings}\label{beyondformality}
	Our proof of the beyond formality Theorem~\ref{wehavesolutions2} will require us to promote the formal solution of Theorem~\ref{firstembedding} to operators acting on some Hilbert space. In particular, we wish to find unitary irreducible representations of the Lie algebra~$\uni(2,\HH)$ on finite-dimensional Hilbert spaces arising as truncations of the infinite-dimensional Hilbert space~$L^2(\RR^3)$. This generalizes the Holstein--Primakoff mechanism realizing finite-dimensional spin representations by truncations of the infinite-dimensional harmonic oscillator Fock space~\cite{holstein}. 
	\smallskip

	Partial sums of the formal power series appearing in the previous section are Laurent polynomials in $\sqrt{\hbar}$ with coefficients in $\mathcal{W}(\heis_3)$. To represent them as operators, we shall consider representations of the Heisenberg algebra. For that, let $\mathcal{S}$ be the dense subspace of Schwartz functions in $\mathcal{H} = L^2(\RR^3)$ and denote by $\gl(\mathcal{H})$ the set of all linear operators on $\mathcal{H}$. Note that $\gl(\mathcal{H})$ does not have an algebra structure. In order to construct an algebra, consider (see \cite[Chapter~2]{schmüdgen1990} and \cite[Chapter~3]{schmüdgen2020})  $$\widetilde{\gl}(\mathcal{S}) := \{A \in \gl(\mathcal{H}) \, \, | \, \, \mathcal{D}(A) = \mathcal{S}\, , \,\, A \hh \mathcal{S} \subseteq \mathcal{S}\,,\,\, \mathcal{S}\subseteq \mathcal{D}(A^*)\,,\,\, A^* \mathcal{S} \subseteq \mathcal{S}\},$$ where $\mathcal{D}$ denotes the domain of its argument and $A^*$ denotes the adjoint operator of~$A$ on $\mathcal{H}$. Set~$A^\dagger$ to be the restriction of this adjoint to $\mathcal{S}$. Then there is a Lie algebra homomorphism $\rho : \heis_3 \to \widetilde{\gl}(\mathcal{S})$, given by
	\begin{align*}
		\rho(a) &:=   \frac{1}{\sqrt{2}} \left(x_1 + \frac{\partial}{\partial x_1}\right)\, , \,\,\,\,\;\; \qquad \rho(a^\dagger)  \, :=  \frac{1}{\sqrt{2}} \left(x_1 - \frac{\partial}{\partial x_1}\right) \, , \;\;  \\
		\rho(a^+_{\dot{+}})  &:= - \frac{1}{\sqrt{2}} \left(x_2 + \frac{\partial}{\partial x_2}\right) \, , \;\; \qquad \rho(a^-_{\dot{-}})  := \frac{1}{\sqrt{2}} \left(x_2 - \frac{\partial}{\partial x_2}\right) \, , \;\;  \\
		\rho(a^-_{\dot{+}})  &:= \frac{1}{\sqrt{2}} \left(x_3 + \frac{\partial}{\partial x_3}\right) \, , \,\,\,\,\;\; \qquad \rho(a^+_{\dot{-}})  :=  \frac{1}{\sqrt{2}} \left(x_3 - \frac{\partial}{\partial x_3}\right) \, ,
	\end{align*} 
	and $\rho(1) := \Id $ (the identity operator). Here, $\big\{x_j, -i\frac{\partial}{\partial x_j}  \, \, | \, \, j = 1,2,3\big\}$ are position and momentum operators, whose domain of definition we take to be $\mathcal{S}$. We also denote by~$\rho$ the unital algebra homomorphism from $\mathcal{W}(\heis_3)$ to $\widetilde{\gl}(S)$ induced by the above Lie algebra representation of $\heis_3$ on $\mathcal{S}$. 
	\smallskip
	
	Recall that there is a countable complete orthonormal basis $\mathcal{B}$ of $\mathcal{H}$, given by (three-dimensional) Hermite functions in $\mathcal{S}$. We denote these by~$\ket{n_1,n_2,n_3}$, where $n_1, n_2, n_3$ are non-negative integers. The above operators act on these basis elements according to
	\begin{align*}
		\rho(a) \ket{n_1,n_2,n_3} &= \sqrt{n_1} \ket{n_1-1,n_2,n_3} \, , \,\,\,\,\;\;\;\; \rho(a^\dagger) \hh \ket{n_1,n_2,n_3} = \sqrt{n_1+1} \ket{n_1+1,n_2,n_3} \, , \;\;  \\
		\rho(a^+_{\dot{+}}) \ket{n_1,n_2,n_3} &= -\sqrt{n_2} \ket{n_1,n_2-1,n_3} \, , \;\;\;\; \rho(a^-_{\dot{-}}) \ket{n_1,n_2,n_3} = \sqrt{n_2+1} \ket{n_1,n_2+1,n_3} \, , \;\;  \\
		\rho(a^-_{\dot{+}}) \ket{n_1,n_2,n_3} &= \sqrt{n_3} \ket{n_1,n_2,n_3-1} \, , \,\,\,\,\;\;\;\; \rho(a^+_{\dot{-}}) \ket{n_1,n_2,n_3} = \sqrt{n_3+1} \ket{n_1,n_2,n_3+1} \, .  
	\end{align*} 
	Moreover,
	$$ \rho(n) \ket{n_1,n_2,n_3} = 2 \, n_1 \ket{n_1,n_2,n_3} \, , \qquad \hspace{0.39 cm}  \rho(N) \ket{n_1,n_2,n_3} = (n_2 + n_3 + 1) \ket{n_1,n_2,n_3}  \, .$$
	We would like to extend the domain of the map $\rho$ to polymeromorphic elements of Equation~\eqref{squarerootlaurent}. However, general formal Laurent series as given in Equation~\eqref{meromorphicseries} may not yield well-defined operators acting on Schwartz functions $\mathcal{S}$. This difficulty will be solved by suitably truncating $\mathcal{H}$, \textit{\`a la} Holstein and Primakoff.
	\smallskip
	
	Motivated by Equation~\eqref{squarerootlaurentseries}, we denote by $S_\ell(n,N,\hbar)$ the partial sum
	\begin{align*}
		S_\ell(n,N,\hbar) := \frac{1}{\sqrt{\hbar}} \sum_{k=0}^{\ell} b_k\left(N + \frac{n}{2}\right) \hbar^k \, ,
	\end{align*}
	so that we can now write 
	\begin{align}\label{truncatedsquarerootop}
		\scalebox{0.99}{$\rho(S_\ell(n,N,\hbar)) \ket{n_1,n_2,n_3} =  \frac{1}{\sqrt{\hbar}} \sum_{k=0}^\ell \frac{\prod_{l=0}^{k-1} (-1)^k \left(1 - 2l\right)}{2^k k!} \left(n_1+n_2+n_3+1\right)^k \hbar^k  \, \, \ket{n_1,n_2,n_3}$} \, .
	\end{align}
	Importantly, the formal parameter $\hbar$ can now stand for any positive real number. Assuming such,
	we may now study the convergence of the right hand side of the above equation for large $\ell$. (It will be clear from context when $\hbar$ is a number and when it is a formal parameter.) Let $m \in \mathbb{Z}_{>0}$ and
	\begin{align}\label{symmetrictensors}
		\mathcal{H}^{\frac{1}{m}} := \Span{\left\{ \,\, \ket{n_1,n_2,n_3} \in \mathcal{B} \,\,\,\,\, | \,\,\,\,\, n_1 + n_2 + n_3 \leq m - 1 \, \right\}} \; .
	\end{align} 
	This is a Hilbert space of finite dimension $m+2 \choose 3$. The operators $\rho\big(S_\ell(n,N,\frac{1}{m})\big)$ have well-defined actions on $\mathcal{H}^{\frac{1}{m}}$ for all $\ell \in \mathbb{N}$. Their action on basis elements is given in Equation~\eqref{truncatedsquarerootop}. Define the linear operator $\rho_S^{\sss\frac{1}{m}}: \mathcal{H}^{\frac{1}{m}} \to \mathcal{H}^{\frac{1}{m}}$ by
	\begin{align}\label{squarerootop}
		\rho_S^{\sss\frac{1}{m}} \ket{n_1,n_2,n_3} = \sqrt{m - n_1 - n_2 - n_3 - 1} \, \, \ket{n_1,n_2,n_3} \, .
	\end{align} 
	
	\begin{proposition}\label{squarerootconverges}
		Let $\hbar \in \mathscr{I} := \{\,\frac{1}{m} \, \,\, | \, \,\, m \in \ZZ_{>0} \, \}$. The sequence of operators $\rho\big(S_\ell(n,N,\hbar)\big)$ converges to the operator $\rho_S \in \gl(\mathcal{H}^{\hbar})$ in the strong operator topology.
	\end{proposition}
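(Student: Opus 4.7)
The plan is to reduce strong operator convergence on the finite-dimensional Hilbert space $\mathcal{H}^{1/m}$ to pointwise convergence on each Hermite basis vector, and then to recognize each such pointwise statement as a convergent Newton binomial series for a square root.

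First, I would identify the coefficients $b_k(x)$ appearing in $S_\ell(n,N,\hbar)$ as those of the binomial expansion of $(1-u)^{1/2}$. A direct computation gives $b_k(x) = (-1)^k \binom{1/2}{k} x^k$, so at the formal level
\[
\sum_{k=0}^\infty b_k(x)\hbar^k = (1 - x\hbar)^{1/2},
\]
with absolute convergence for $|x\hbar| \leq 1$, since the exponent $1/2 > 0$ and $|\binom{1/2}{k}| \sim (2\sqrt{\pi})^{-1} k^{-3/2}$ by Stirling.

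Next, I would plug in the action on a basis vector. Since $\rho(n)$ acts as $2n_1$ and $\rho(N)$ as $n_2+n_3+1$, the operator $\rho(N+n/2)$ has eigenvalue $x := n_1+n_2+n_3+1$ on $\ket{n_1,n_2,n_3}$. With $\hbar = 1/m$, Equation~\eqref{truncatedsquarerootop} rearranges to
\[
\rho\bigl(S_\ell(n,N,1/m)\bigr) \ket{n_1,n_2,n_3} = \sqrt{m} \sum_{k=0}^\ell \binom{1/2}{k}\bigl(-x/m\bigr)^k \ket{n_1,n_2,n_3}.
\]
The constraint $n_1+n_2+n_3 \leq m-1$ defining $\mathcal{H}^{1/m}$ forces $x/m \in (0,1]$, so the scalar prefactor converges as $\ell \to \infty$ to $\sqrt{m}\,(1-x/m)^{1/2} = \sqrt{m-x}$, which is exactly the eigenvalue of $\rho_S^{1/m}$ from Equation~\eqref{squarerootop}. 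The main (mild) obstacle is the boundary case $x = m$, corresponding to the top shell $n_1+n_2+n_3 = m-1$, where $x/m$ sits exactly at the radius of convergence; here the absolute summability noted above, together with Abel's theorem (or continuity of $\sqrt{\cdot}$ at zero), yields $\sum_{k=0}^\infty (-1)^k \binom{1/2}{k} = 0$, matching the vanishing eigenvalue of $\rho_S^{1/m}$ on that shell.

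Finally, since $\mathcal{H}^{1/m}$ is finite-dimensional of dimension $\binom{m+2}{3}$, pointwise convergence of the partial sums on the finitely many orthonormal basis elements automatically upgrades to strong operator convergence by linearity and the triangle inequality. Hence $\rho\bigl(S_\ell(n,N,\hbar)\bigr) \to \rho_S^{1/m}$ in the strong operator topology on $\mathcal{H}^{1/m}$, as claimed. No uniform estimates across basis elements are needed precisely because the index set is finite; were $\mathcal{H}^{1/m}$ infinite-dimensional this would fail, which is one heuristic reason for the truncation $\hbar \in \mathscr{I}$.
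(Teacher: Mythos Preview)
Your proof is correct and follows essentially the same approach as the paper's: both reduce to the binomial series for $\sqrt{1-x}$ acting diagonally on the Hermite basis, invoke convergence inside the radius, and handle the boundary shell $n_1+n_2+n_3+1=m$ via absolute convergence at $x=1$. Your write-up is simply more explicit about the Stirling estimate on $\binom{1/2}{k}$ and the passage from pointwise to strong convergence via finite-dimensionality, both of which the paper leaves implicit.
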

	\begin{proof}
		Convergence acting on states $\ket{n_1,n_2,n_3}$ such that $n_1 + n_2 + n_3 + 1 < m$ follows from the fact that $x = \hbar(n_1+n_2+n_3+1)$ is inside the radius of convergence of the power series for $\sqrt{1-x}$ around $x=0$. In fact, this power series converges absolutely when $x = 1$. This leads to the claimed limit. 
	\end{proof}
	Let us now consider the elements $(\mathcal{P}^\alpha_{ \dot{\alpha}})_\ell^{\hbar}$ and~$(\mathcal{K}_{\dot{\alpha}\dot{\beta}})_\ell^{\hbar}$ in $\mathcal{W}(\heis_3)$ obtained by replacing the Laurent series expansion of the square roots appearing in $\mathcal{P}^\alpha_{ \dot{\alpha}}$ and $\mathcal{K}_{\dot{\alpha}\dot{\beta}}$ with the partial sums $S_\ell(n,N,\hbar)$. For example, $$ (\mathcal{P}^+_{\dot{-}})_\ell^{\hbar} :=  a^\dagger \, a^+_{\dot{+}}   + a^+_{\dot{-}} \, S_\ell(n,N,\hbar) \, .$$
	This gives us a sequence of operators $\rho\big((\mathcal{P}^\alpha_{ \dot{\alpha}})_\ell^{\hbar}\big)$ and $\rho\big((\mathcal{K}_{\dot{\alpha}\dot{\beta}})_\ell^{\hbar}\big)$. Unlike the operators~$\rho\big(S_\ell(n,N,\hbar)\big)$ of Equation~\eqref{truncatedsquarerootop}, the image of these operators may not be contained in~$\mathcal{H}^{\frac{1}{m}}$, for example, 
	\begin{align*}
		\scalebox{0.945}{$\rho\big((\mathcal{P}^+_{\dot{-}})_\ell^{\sss\frac{1}{m}}\big) \ket{n_1,n_2,n_3}$} &=  \scalebox{0.945}{$\left(\rho(a^\dagger) \, \rho(a^+_{\dot{+}})   + \rho(a^+_{\dot{-}}) \rho\Big(S_\ell\big(n,N,\frac{1}{m}\big)\Big) \hh \right) \ket{n_1,n_2,n_3} $}\\
		&= \scalebox{0.945}{$-\sqrt{n_1+1}\sqrt{n_2}\ket{n_1+1,n_2-1,n_3}$} \\ & + \scalebox{0.945}{$\sqrt{m}\sqrt{n_3 + 1} \sum_{k=0}^\ell \frac{\prod_{l=0}^{k-1} (-1)^k \left(1 - 2l\right)}{2^k k!} \left(n_1+n_2+n_3+1\right)^k \frac{1}{m^k} \ket{n_1,n_2,n_3+1}$}\, .
	\end{align*}
	As seen from the above, successive applications of $\rho\big((\mathcal{P}^+_{\dot{-}})_\ell^{\sss\frac{1}{m}}\big)$ produces terms with ever increasing eigenvalue~$n_3$ of the third number operator $\rho(a^-_{\dot{+}})\rho(a^+_{\dot{-}})$. Therefore, we must treat $\rho\big((\mathcal{P}^\alpha_{ \dot{\alpha}})_\ell^{\sss\frac{1}{m}}\big)$ and $\rho\big((\mathcal{K}_{\dot{\alpha}\dot{\beta}})_\ell^{\sss\frac{1}{m}}\big)$ as maps from $\mathcal{H}^{\frac{1}{m}}$ to $\mathcal{H}$ when analyzing their large $\ell$ limits. To that end, define operators mapping $\mathcal{H}^{\frac{1}{m}} \to \mathcal{H}$ by
	\begin{align}\label{pkoperators}
		\rho_{\mathcal{K}_{\dot{+}\dot{+}}} &:= -2 \hh i \hh \rho_S^{\sss\frac{1}{m}} \hh \rho(a) \, , \qquad \qquad \qquad  \,\,\,\,\,\,\,  \rho_{\mathcal{K}_{\dot{-}\dot{-}}} := 2 \hh i \hh \rho(a^\dagger) \hh \rho_S^{\sss\frac{1}{m}} \, , \nonumber \\
		\rho_{\mathcal{P}^+_{\dot{+}}} &:= -\rho(a^+_{\dot{-}})\rho(a)   + \rho_S^{\sss\frac{1}{m}} \hh \rho(a^+_{\dot{+}}) \, , \,\, \qquad \rho_{\mathcal{P}^-_{\dot{-}}} := \rho(a^\dagger) \rho(a^-_{\dot{+}})  + \rho(a^-_{\dot{-}}) \rho_S^{\sss\frac{1}{m}} \, , \\
		\rho_{\mathcal{P}^-_{\dot{+}}} &:= - \rho(a^-_{\dot{-}}) \rho(a)   + \rho_S^{\sss\frac{1}{m}} \hh \rho(a^-_{\dot{+}}) \, , \,\, \qquad  \rho_{\mathcal{P}^+_{\dot{-}}} := \rho(a^\dagger) \rho(a^+_{\dot{+}})   + \rho(a^+_{\dot{-}}) \rho_S^{\sss\frac{1}{m}} \, . \nonumber
	\end{align}
	Here, we view $\rho(a^\dagger)$, $\rho(a^-_{\dot{-}})$, and~$\rho(a^+_{\dot{-}})$ as maps from $\mathcal{H}^{\frac{1}{m}} \to \mathcal{H}^{\frac{1}{m+1}}$ and  $\rho(a)$, $\rho(a^+_{\dot{+}})$, and~$\rho(a^-_{\dot{+}})$ as endomorphisms of $\mathcal{H}^{\frac{1}{m}}$. For example, that the composition of $\rho(a^\dagger)$ with~$\rho_S^{\sss\frac{1}{m}}$ is a sequence of maps $\mathcal{H}^{\frac{1}{m}} \to \mathcal{H}^{\frac{1}{m}} \to \mathcal{H}^{\frac{1}{m+1}} \subset \mathcal{H}$, while that of $\rho_S^{\sss\frac{1}{m}}$ with~$\rho(a)$ sends $\mathcal{H}^{\frac{1}{m}} \to \mathcal{H}^{\frac{1}{m}} \to \mathcal{H}^{\frac{1}{m}} \subset \mathcal{H}$. The following proposition establishes that the six operators displayed above in fact define maps from $\mathcal{H}^{\frac{1}{m}}$ to $\mathcal{H}^{\frac{1}{m}}$.
	\begin{proposition}\label{operatorconvergence}
		Let $\hbar \in \mathscr{I}$. The sequences of operators $\rho\big((\mathcal{P}^\alpha_{ \dot{\alpha}})_\ell^{\hbar}\big)$ and~$\rho\big((\mathcal{K}_{\dot{\alpha}\dot{\beta}})_\ell^{\hbar}\big)$ converge strongly to endomorphisms $\rho_{\mathcal{P}^\alpha_{\dot{\alpha}}}$ and $\rho_{\mathcal{K}_{\dot{\alpha}\dot{\beta}}}$ of~$\mathcal{H}^{\hbar}$, respectively. Moreover, the map~$\rho^{\hbar}$ defined by
		\begin{align*}
			J^{\alpha\beta} \mapsto	\rho(\mathcal{J}^{\alpha\beta}) \, , \qquad  P^\alpha_{\dot{\alpha}} \mapsto \rho_{\mathcal{P}^\alpha_{\dot{\alpha}}} \, , \qquad K_{\dot{\alpha}\dot{\beta}} \mapsto \rho_{\mathcal{K}_{\dot{\alpha}\dot{\beta}}} \,\, 
		\end{align*}
		gives a representation of the Lie algebra $\uni(2,\HH)$ on $\mathcal{H}^{\hbar}$.
	\end{proposition}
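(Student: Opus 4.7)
The plan is to deduce strong convergence of the full operator sequences from Proposition~\ref{squarerootconverges}, then verify that the limit operators preserve the finite-dimensional truncation $\mathcal{H}^{\hbar}$, and finally establish the $\uni(2,\HH)$ brackets by combining finite-dimensional continuity of the commutator with the formal identity of Theorem~\ref{firstembedding}. Regarding convergence, each operator $\rho\bigl((\mathcal{P}^\alpha_{\dot{\alpha}})_\ell^{\hbar}\bigr)$ and $\rho\bigl((\mathcal{K}_{\dot{\alpha}\dot{\beta}})_\ell^{\hbar}\bigr)$ is a fixed sum of products whose only $\ell$-dependent factor is $\rho(S_\ell(n,N,\hbar))$; the remaining factors $\rho(a),\rho(a^\dagger),\rho(a^\alpha_{\dot{\alpha}})$ send $\mathcal{H}^{\hbar}$ into the slightly larger space $\mathcal{H}^{1/(m+1)}$ and are uniformly bounded there. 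Strong convergence then follows from Proposition~\ref{squarerootconverges} together with the standard product rule for strong limits involving a bounded factor, yielding the operators listed in Equation~\eqref{pkoperators}.

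To see that the limits preserve $\mathcal{H}^{\hbar}$, I would inspect the six operators of Equation~\eqref{pkoperators} case by case. The crucial mechanism is that $\rho_S^{\sss\frac{1}{m}}$ acts diagonally by the factor $\sqrt{m-n_1-n_2-n_3-1}$, which vanishes precisely on the boundary states $n_1+n_2+n_3=m-1$. In each operator, every raising factor ($\rho(a^\dagger)$, $\rho(a^\pm_{\dot{-}})$) that could push outside $\mathcal{H}^{\hbar}$ is composed so that $\rho_S^{\sss\frac{1}{m}}$ is applied first; the potentially offending components are annihilated before the raising can occur, and the output remains in $\mathcal{H}^{\hbar}$.

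For the commutation relations, finite-dimensionality of $\mathcal{H}^{\hbar}$ promotes strong convergence to norm convergence, so commutators are jointly continuous and
\[
\bigl[\rho_{\mathcal{A}},\rho_{\mathcal{B}}\bigr] \;=\; \lim_{\ell\to\infty}\bigl[\rho\bigl((\mathcal{A})_\ell^{\hbar}\bigr),\rho\bigl((\mathcal{B})_\ell^{\hbar}\bigr)\bigr]
\]
for $\mathcal{A},\mathcal{B}$ ranging over the generators. Theorem~\ref{firstembedding} already provides the full $\uni(2,\HH)$ brackets as identities in $\mathcal{W}^{\hbar}$. Truncating the square roots at order $\ell$ perturbs the commutator by a tail that, when applied to a fixed basis vector of $\mathcal{H}^{\hbar}$ and evaluated at $\hbar=1/m$, vanishes as $\ell\to\infty$ by the same absolute convergence of the Taylor series for $\sqrt{1-x}$ on $[0,1]$ that underlies Proposition~\ref{squarerootconverges}. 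Hence the $\ell\to\infty$ limit of the truncated commutator equals $\rho$ of the formal right-hand side, yielding for example $\bigl[\rho_{\mathcal{P}^\alpha_{\dot{\alpha}}},\rho_{\mathcal{P}^\beta_{\dot{\beta}}}\bigr]=i\epsilon_{\dot{\alpha}\dot{\beta}}\rho(\mathcal{J}^{\alpha\beta})+i\epsilon^{\alpha\beta}\rho_{\mathcal{K}_{\dot{\alpha}\dot{\beta}}}$ and analogous expressions for the remaining brackets.

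The main obstacle I anticipate is control of this commutator tail: one must verify that $\bigl[\rho((\mathcal{A})_\ell^{\hbar}),\rho((\mathcal{B})_\ell^{\hbar})\bigr]$ differs from the $\ell$-th partial sum of the full formal commutator in $\mathcal{W}^{\hbar}$ only by terms that vanish strongly on $\mathcal{H}^{\hbar}$. This reduces to an estimate of the form $\|\rho_S^{\sss\frac{1}{m}}-\rho(S_\ell(n,N,1/m))\|_{\mathrm{op}}=O(\hbar^{\ell+1/2})$ on the finite-dimensional subspace, which follows from uniform convergence of the Taylor series for $\sqrt{1-x}$ on $[0,1]$. A more pedestrian alternative bypasses the tail analysis entirely: since $\mathcal{H}^{\hbar}$ is finite-dimensional and the limit operators act explicitly on the basis of Equation~\eqref{symmetrictensors}, one may verify the required commutation relations by a direct, if tedious, matrix computation.
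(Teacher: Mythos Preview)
Your treatment of strong convergence and of the endomorphism property is essentially the same as the paper's, and correct in substance. One small imprecision: it is not true that \emph{every} raising factor is preceded by $\rho_S^{\sss\frac{1}{m}}$. In $\rho_{\mathcal{P}^-_{\dot{-}}}$ and $\rho_{\mathcal{P}^+_{\dot{-}}}$, for instance, the term $\rho(a^\dagger)\rho(a^\mp_{\dot{+}})$ contains a raising $\rho(a^\dagger)$ paired not with $\rho_S^{\sss\frac{1}{m}}$ but with a lowering ladder operator, so the total number $n_1+n_2+n_3$ is preserved rather than killed at the boundary. The paper makes this explicit by writing out the action of each operator on a basis vector and observing which coefficients vanish.

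The genuine gap is in your primary argument for the commutation relations. The displayed identity
\[
\bigl[\rho_{\mathcal{A}},\rho_{\mathcal{B}}\bigr]=\lim_{\ell\to\infty}\bigl[\rho((\mathcal{A})_\ell^{\hbar}),\rho((\mathcal{B})_\ell^{\hbar})\bigr]
\]
does not follow from norm convergence on $\mathcal{H}^{\hbar}$, because for finite $\ell$ the truncated operators do \emph{not} preserve $\mathcal{H}^{\hbar}$: they land in $\mathcal{H}^{1/(m+1)}$. Composing therefore requires $\rho((\mathcal{A})_\ell^{\hbar})$ to act on boundary-plus-one states $|n_1,n_2,n_3\rangle$ with $n_1+n_2+n_3=m$, and on such states the Taylor series underlying $\rho(S_\ell)$ is evaluated at $x=(m+1)/m>1$ and diverges. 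Joint continuity of the commutator thus fails. Relatedly, your tail estimate $\|\rho_S^{\sss\frac{1}{m}}-\rho(S_\ell)\|_{\mathrm{op}}=O(\hbar^{\ell+1/2})$ is not correct: at the boundary state (where $x=1$) the remainder of the Taylor series for $\sqrt{1-x}$ decays only like $\ell^{-1/2}$, not exponentially in $\ell$, so uniform convergence on $[0,1]$ does not yield the claimed rate. It may be that the divergent contributions cancel in every commutator (they do in several examples), but establishing this requires essentially a case-by-case algebraic check, which is what your ``pedestrian alternative'' amounts to.

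The paper takes exactly that route: it writes out the action of each limiting operator on $|n_1,n_2,n_3\rangle$ explicitly and verifies $\rho^{\hbar}([X,Y])=[\rho^{\hbar}(X),\rho^{\hbar}(Y)]$ directly from those formulas, bypassing any limit of truncated commutators.
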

	\begin{proof}
		First, note that the linear operators $\rho(\mathcal{J}^{\alpha\beta})$ act according to
		\begin{align}\label{Jaction}
			\scalebox{1}{$\rho (\mathcal{J}^{++})\ket{n_1,n_2,n_3}$}&=\scalebox{1}{$2i\sqrt{n_3+1}\sqrt{n_2}\ket{n_1,n_2-1,n_3+1}$}\,, \nonumber\\
			\scalebox{1}{$\rho (\mathcal{J}^{+-})\ket{n_1,n_2,n_3}$}&=\scalebox{1}{$-i(n_3-n_2)\ket{n_1,n_2,n_3}$}\,,\\
			\scalebox{1}{$\rho (\mathcal{J}^{--})\ket{n_1,n_2,n_3}$}&=\scalebox{1}{$-2i\sqrt{n_2+1}\sqrt{n_3}\ket{n_1,n_2+1,n_3-1}$}\nonumber\,.
		\end{align}
		In particular, the eigenvalue $n_1+n_2+n_3+1$ of the number operator $\rho\big( N + \frac{n}{2} \big)$ is preserved by $\rho(\mathcal{J}^{\alpha\beta})$.
		Thus, the operators $\rho(\mathcal{J}^{\alpha\beta})$ are endomorphisms of $\mathcal{H}^{\hbar}$. 
		
		We must now analyze $\mathcal{P}^\alpha_{\dot{\alpha}}$ and $\mathcal{K}_{\dot{\alpha}\dot{\beta}}$. Firstly, the sequences of operators $\rho\big((\mathcal{P}^\alpha_{ \dot{\alpha}})_\ell^{\hbar}\big)$ and~$\rho\big((\mathcal{K}_{\dot{\alpha}\dot{\beta}})_\ell^{\hbar}\big)$ converge strongly to the linear operators $\rho_{\mathcal{P}^\alpha_{\dot{\alpha}}}$ and $\rho_{\mathcal{K}_{\dot{\alpha}\dot{\beta}}}$, respectively, by virtue of Proposition~\ref{squarerootconverges}. It remains to establish that $\rho_{\mathcal{P}^\alpha_{\dot{\alpha}}}$ and $\rho_{\mathcal{K}_{\dot{\alpha}\dot{\beta}}}$ are endomorphisms of~$\mathcal{H}^{\hbar}$. It suffices to consider their actions on the basis elements $\ket{n_1,n_2,n_3}$ of~$\mathcal{H}^{\hbar}$. Simple computations show
		\begin{align*}
			\scalebox{0.91}{$\rho_{ \mathcal{K}_{\dot{+}\dot{+}}} \ket{n_1,n_2,n_3}$} &= \scalebox{0.9}{$-2i \sqrt{n_1} \sqrt{\frac{1}{\hbar} - n_1 - n_2 - n_3 } \ket{n_1-1,n_2,n_3}$} \, ,\\
			\scalebox{0.91}{$\rho_{ \mathcal{K}_{\dot{+}\dot{-}}} \ket{n_1,n_2,n_3}$} &= \scalebox{0.9}{$ i \left(\frac{1}{\hbar} - 2 n_1 - n_2 - n_3 - 1\right) \ket{n_1,n_2,n_3}$} \, ,\\
			\scalebox{0.91}{$\rho_ {\mathcal{K}_{\dot{-}\dot{-}}} \ket{n_1,n_2,n_3}$} &= \scalebox{0.9}{$2i \sqrt{n_1 + 1} \sqrt{\frac{1}{\hbar} - n_1 - n_2 - n_3 - 1} \ket{n_1+1,n_2,n_3}$} \, ,\\
			\scalebox{0.91}{$\rho_{\mathcal{P}^+_{\dot{+}}} \ket{n_1, n_2, n_3}$} &= \scalebox{0.91}{$-\sqrt{n_1}\sqrt{n_3+1}\ket{n_1-1,n_2,n_3+1} - \sqrt{n_2}\sqrt{\frac{1}{\hbar} - n_1 - n_2 - n_3 } \ket{n_1,n_2-1,n_3}$}\, , \\
			\scalebox{0.91}{$\rho_{\mathcal{P}^-_{\dot{-}}} \ket{n_1, n_2, n_3}$} &= \scalebox{0.91}{$\sqrt{n_1+1}\sqrt{n_3}\ket{n_1+1,n_2,n_3-1} + \sqrt{n_2+1}\sqrt{\frac{1}{\hbar} - n_1 - n_2 - n_3 -1} \ket{n_1,n_2+1,n_3}$}\, , \\
			\scalebox{0.91}{$\rho_{\mathcal{P}^-_{\dot{+}}} \ket{n_1, n_2, n_3}$} &= \scalebox{0.91}{$-\sqrt{n_1}\sqrt{n_2+1}\ket{n_1-1,n_2+1,n_3} + \sqrt{n_3}\sqrt{\frac{1}{\hbar} - n_1 - n_2 - n_3 } \ket{n_1,n_2,n_3-1}$}\, , \\
			\scalebox{0.91}{$\rho_{\mathcal{P}^+_{\dot{-}}} \ket{n_1, n_2, n_3}$} &= \scalebox{0.91}{$-\sqrt{n_1+1}\sqrt{n_2}\ket{n_1+1,n_2-1,n_3} + \sqrt{n_3 + 1}\sqrt{\frac{1}{\hbar} - n_1 - n_2 - n_3 - 1} \ket{n_1,n_2,n_3+1}$}\, . 
		\end{align*}
		Now, recall that the basis elements $\ket{n_1,n_2,n_3}$ of $\mathcal{H}^{\hbar}$ satisfy $n_1+n_2+n_3 + 1 \leq \frac{1}{\hbar}$. But, the above operators do not preserve the eigenvalue of the total number operator $\rho(N + \frac{n}{2})$. However, it is easy to check that whenever a vector $\ket{a,b,c}$ with $a+b+c+1=\frac{1}{\hbar}+1$ is produced in the above formul\ae, a corresponding square root coefficient vanishes.
		
		One may use the formul\ae\ of the previous two displays above to show that
		\begin{align*} \label{lierepu2}
			\rho^{\hbar}([X,Y]) \ket{n_1,n_2,n_3} = [\rho^{\hbar}(X),\rho^{\hbar}(Y)] \ket{n_1,n_2,n_3} \, ,
		\end{align*}
		for any $X,Y \in \uni(2,\HH)$ and basis vector $\ket{n_1,n_2,n_3} \in \mathcal{H}^{\hbar}$. This establishes the Lie algebra representation statement.
	\end{proof}		
	\begin{remark}\label{re:symmetrictensors}
		The representations $\rho^{\hbar}$ on $\mathcal{H}^{\hbar}$ of Proposition~\ref{operatorconvergence} are in fact unitary irreducible representations of  dimension ${\sss\frac{1}{\hbar}}+2 \choose 3$ with fundamental weights $(\frac{1}{\hbar}-1,0)$. We shall call these \textit{symmetric tensor representations} because $\mathcal{H}^{\hbar}$ is realized by the space of symmetric tensors of rank strictly less than $\frac{1}{\hbar}$ in dimension three; see Equation~\eqref{symmetrictensors}. \hfill $\blacklozenge$
	\end{remark}

	Let us call the embedding map of Theorem~\ref{firstembedding} ``$\operatorname{embed}$'' and the map given in Proposition~\ref{operatorconvergence} from the image of $\operatorname{embed}$ in $\mathcal{W}^{\hbar}$ to unitary endomorphisms of $\mathcal{H}^{\hbar}$ ``$\operatorname{shorten}$''. Then, the results of this section are summarized as follows. 
	\begin{theorem}\label{th:summarydiagram}
		Let $\frac{1}{m} \in \mathscr{I}$. The diagram
		\begin{equation*}
			\begin{tikzcd}[row sep = huge, column sep = huge]
				& \hspace{1.3 cm}\operatorname{embed}(\uni(2,\HH)) \arrow[dr,"\operatorname{shorten}"] \subset \mathcal{W}^{\hbar}  &  \\
				\uni(2,\HH) \arrow[ur,hook,"\operatorname{embed}"] \arrow[rr,"\rho^{\frac{1}{m}}"] & & \uni(\mathcal{H}^{\frac{1}{m}}) 
			\end{tikzcd}
		\end{equation*} 
		commutes.
	\end{theorem}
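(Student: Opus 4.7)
The commutativity of the diagram amounts to verifying that $(\operatorname{shorten}\circ\operatorname{embed})(X) = \rho^{\frac{1}{m}}(X)$ for every generator $X$ of $\uni(2,\HH)$. My plan is to check this generator-by-generator, splitting into two cases according to whether $\operatorname{embed}(X)$ involves the polymeromorphic square root $\sqrt{\frac{1}{\hbar}-N-\frac{n}{2}}$ or not.

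First I would handle the \emph{polynomial case}: the images $\operatorname{embed}(J^{\alpha\beta})=\mathcal{J}^{\alpha\beta}$ and $\operatorname{embed}(K_{\dot{+}\dot{-}})=\mathcal{K}_{\dot{+}\dot{-}}=i(\tfrac{1}{\hbar}-N-n)$ lie in $\mathcal{W}(\heis_3)[\sqrt{\hbar}^{-1}]$ and contain no formal series. Here the shorten map, per its definition from Proposition~\ref{operatorconvergence}, reduces to a direct application of $\rho$, and the resulting endomorphisms of $\mathcal{H}^{\frac{1}{m}}$ are precisely $\rho^{\frac{1}{m}}(J^{\alpha\beta})$ and $\rho^{\frac{1}{m}}(K_{\dot{+}\dot{-}})$ by comparing with Equation~\eqref{Jaction} and the explicit formula for $\rho_{\mathcal{K}_{\dot{+}\dot{-}}}$ in Proposition~\ref{operatorconvergence}. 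That $\rho(\mathcal{J}^{\alpha\beta})$ and $\rho(\mathcal{K}_{\dot{+}\dot{-}})$ restrict to endomorphisms of $\mathcal{H}^{\frac{1}{m}}$ follows immediately from the fact that they preserve the eigenvalue of the total number operator $\rho(N+\tfrac{n}{2})$.

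For the \emph{square-root case} (the six generators $\mathcal{K}_{\dot{\pm}\dot{\pm}}$ and $\mathcal{P}^\alpha_{\dot{\alpha}}$), I would invoke Proposition~\ref{squarerootconverges} to get the strong-operator limit $\rho_S^{\frac{1}{m}}$ of the partial sums $\rho(S_\ell(n,N,\frac{1}{m}))$, and then substitute this limit into the formulas of Theorem~\ref{firstembedding}. By the very definition of shorten via the partial-sum sequences $(\mathcal{P}^\alpha_{\dot{\alpha}})_\ell^{\hbar}$ and $(\mathcal{K}_{\dot{\alpha}\dot{\beta}})_\ell^{\hbar}$, the outputs are exactly the operators $\rho_{\mathcal{K}_{\dot{\alpha}\dot{\beta}}}$ and $\rho_{\mathcal{P}^\alpha_{\dot{\alpha}}}$ listed in Equation~\eqref{pkoperators}, which in turn are the definitions of $\rho^{\frac{1}{m}}(K_{\dot{\alpha}\dot{\beta}})$ and $\rho^{\frac{1}{m}}(P^\alpha_{\dot{\alpha}})$.

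The bulk of the technical content (convergence of the partial sums, the fact that the limits land in $\End(\mathcal{H}^{\frac{1}{m}})$ rather than merely in $\gl(\mathcal{H})$, and verification of the $\uni(2,\HH)$ commutation relations on $\mathcal{H}^{\frac{1}{m}}$) has already been established in Propositions~\ref{squarerootconverges} and~\ref{operatorconvergence}, so no new analytic work is required. The only subtlety I anticipate, and the main thing to be careful about, is the distinction between the formal symbol $\hbar$ used in Theorem~\ref{firstembedding} and the numerical specialization $\hbar=\frac{1}{m}\in\mathscr{I}$ used when evaluating shorten; the diagram's commutativity really asserts that these two perspectives agree upon the chosen specialization, which is guaranteed because the coefficient-by-coefficient definition of the partial sums $S_\ell(n,N,\hbar)$ is identical in both settings. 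Once this bookkeeping is made explicit, commutativity follows tautologically.
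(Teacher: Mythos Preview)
Your proposal is correct and matches the paper's treatment: the paper states Theorem~\ref{th:summarydiagram} without a separate proof, presenting it explicitly as a summary of Propositions~\ref{squarerootconverges} and~\ref{operatorconvergence}, since $\rho^{\frac{1}{m}}$ is \emph{defined} in Proposition~\ref{operatorconvergence} precisely as $\operatorname{shorten}\circ\operatorname{embed}$ on generators. Your generator-by-generator unpacking and the observation that commutativity is essentially tautological once those propositions are in hand is exactly the intended reading.
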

	
	\begin{remark}\label{filteredvectorspaces}
		The dimension of the subspaces $\mathcal{H}^{\hbar}$ grows in the ``classical'' $\hbar = \frac{1}{m} \to 0$ limit. In fact, there is a vector space filtration
		\begin{align*}
			\mathcal{H}^{1} \subset \mathcal{H}^{\frac{1}{2}} \subset \mathcal{H}^{\frac{1}{3}} \subset \dots \subset \mathcal{H} \, .
		\end{align*}
		A similar phenomenon was observed in~\cite{landsmancoadjoint}. \hfill $\blacklozenge$
	\end{remark}
	\section{Quantization}\label{sec:quantization}
	\subsection{Formal quantization}\label{sec:formalquantization} We are almost ready to prove Theorem~\ref{wehavesolutions2}, but still need to give a detailed construction of the Hilbert bundle. 
	The Hilbert space $\mathcal{H} = L^2(\RR^3)$ is a unitary representation of $\Mpc(6,\RR)$~\cite{robinson1989metaplectic}. The Lie group $\Mpc(2n,\RR)$ is constructed as follows~\cite{robinson1989metaplectic}. Because $\Uni(n,\CC)$ is the maximal compact subgroup of $\Sp(2n,\RR)$, we have that $2 \ZZ < \ZZ = \pi_1(\Sp(2n,\RR)) $. The metaplectic group $\Mp(2n,\RR)$ is the unique connected double cover of $\Sp(2n,\RR)$, corresponding to the short exact sequence of Lie groups
	$$\begin{tikzcd} 1 \arrow[r] & \ZZ_2 \arrow[r] & \Mp(2n,\RR) \arrow[r, "\sigma"] & \Sp(2n,\RR) \arrow[r] & 1 \end{tikzcd} \, \, .$$ 
	The preimage of the identity $\sigma^{-1}(e) \cong \mathbb{Z}_2$. Moreover $\Uni(1,\CC)$ also has a subgroup $\ZZ_2 \cong \{-1,1\}$, so the metaplectic-c group~$\Mpc(2n,\RR)$ is given by
	\begin{align*}
		\Mpc(2n,\RR) := \Mp(2n,\RR) \times_{\mathbb{Z}_2} \Uni(1,\CC) \, .
	\end{align*}
	These groups are related by the following commutative diagram~\cite{forgerhess1979,hess1981}
	\begin{equation}\label{mpcdiagram}
	\begin{tikzcd} & 1 \arrow[d] & 1 \arrow[d] & 1 \arrow[d] & \\
	1 \arrow[r] & \ZZ_2 \arrow[r] \arrow[d] & \Mp(2n,\RR) \arrow[r, "\sigma"]  \arrow[d]& \Sp(2n,\RR) \arrow[r] \arrow[d,equal] & 1 \\
	1 \arrow[r] & \Uni(1,\CC) \arrow[r] \arrow[d] & \Mpc(2n,\RR) \arrow[r, "\tau"] \arrow[d]& \Sp(2n,\RR) \arrow[r] \arrow[d] & 1 \\
	1 \arrow[r] & \Uni(1,\CC) \arrow[r,equal] \arrow[d] & \Uni(1,\CC) \arrow[r] \arrow[d] & 1 & \\
	& 1 & 1 & & \end{tikzcd}
	\end{equation}
	whose columns and rows are exact. Note that $\tau([A,\lambda])=\sigma(A)$ where $A \in \Mp(2n,\RR)$.
	
	As mentioned in the introduction, the bundle of symplectic frames $\operatorname{Fr}_s(\xi)$ on the distribution $\xi$ is a principal $\Sp(6,\RR)$-bundle and admits a lift of the structure group to a principal~$\Mpc(6,\RR)$-bundle $\mathscr{P} \overset{\tilde{\pi}}{\longrightarrow} \operatorname{Fr}_s(\xi)$, where $\tilde{\pi}$ is principal bundle homomorphism~\cite{robinson1989metaplectic}. The associated Hilbert bundle 
	$$ \mathcal{H}Z := \mathscr{P}  \times_{\Mpc(6,\RR)} \mathcal{H} \, $$
	is known as the symplectic spinor bundle~\cite{symplecticdirac2}. 
	\subsubsection{Homogeneous model}\label{homogeneoushilbert}
We now want to relate the Hilbert bundle $\mathcal{H}Z$ to the homogeneous model. Note that generally, if $M\cong G/H$ is a homogeneous space, the frame bundle~$\operatorname{Fr}(M)$ is isomorphic to $G \times_H \GL(T_{[e]} M)$, where $[e]$ is the identity coset. Specializing to~$M$ the seven sphere with~$H = \Uni(1,\HH)$, one has an~$H$-invariant contact form~$\alpha$, and an~$H$-invariant two-form~$d\alpha$; see Section~\ref{sec:contactsphere}. Moreover the contact form $\alpha$ determines the distribution~$\xi$, which is preserved by~$\Uni(1,\HH)$. The restriction of $d\alpha$ to $\xi$ makes each fiber a symplectic vector space with a symplectic bilinear form preserved by~$\Sp(6,\RR)$. Thus, $\Uni(1,\HH)$ embeds into $\Sp(6,\RR)$. From this embedding and Diagram~\eqref{mpcdiagram} above, we can construct an embedding of~$\Uni(1,\HH)$ into~$\Mpc(6,\RR)$. To begin with, we lift the $\Sp(6,\RR)$ embedding of~$\Uni(1,\HH)$ to an~$\Mp(6,\RR)$ embedding 
	$$
		\begin{tikzcd}
			 &\Mp(6,\RR) \arrow[d]\\\Uni(1,\HH) \arrow[ur,hook,dashed,"s"]  \arrow[r,hook] & \Sp(6,\RR) \, .
		\end{tikzcd}
	$$
	This follows because $\Uni(1,\HH)$ is compact and simply-connected. From Diagram~\eqref{mpcdiagram}, $\Mp(6,\RR)$ is a closed subgroup of~$\Mpc(6,\RR)$, and hence is an embedded Lie subgroup of~$\Mpc(6,\RR)$. Composing $s$ with this embedding, we obtain an embedding of~$\Uni(1,\HH)$ into~$\Mpc(6,\RR)$.
	\smallskip
	
	Now, the principal $\Uni(1,\HH)$-bundle $\Uni(2,\HH) \to \Uni(2,\HH)/\Uni(1,\HH) \cong S^7$ extends both to a principal $\Sp(6,\RR)$-bundle $\Uni(2,\HH)\times_{\Uni(1,\HH)} \Sp(6,\RR)$, which is isomorphic to the symplectic frame bundle $\operatorname{Fr}_s(\xi)$, and also to a principal $\Mpc(6,\RR)$-bundle $\Uni(2,\HH)\times_{\Uni(1,\HH)} \Mpc(6,\RR)$. In fact, the latter principal bundle $\Uni(2,\HH)\times_{\Uni(1,\HH)} \Mpc(6,\RR)$ is an equivariant lift of the former bundle $\Uni(2,\HH)\times_{\Uni(1,\HH)} \Sp(6,\RR)$. To see this, we observe that the following diagram commutes
	\begin{equation}
		\begin{tikzcd}[row sep = 1.7 cm, column sep = tiny]
			\Uni(2,\HH)\times_{\Uni(1,\HH)} \Mpc(6,\RR) \arrow[rr,"\pi_\tau"] \arrow[rd,"\pi_m"]
			& & \Uni(2,\HH)\times_{\Uni(1,\HH)} \Sp(6,\RR) \arrow[ld,"\pi_s"]\\
			& S^7 \cong \Uni(2,\HH) / \Uni(1,\HH) \,.& 
		\end{tikzcd} 
	\end{equation}
	The map $\pi_\tau$ is defined by $\pi_\tau([g,\tilde{A}]) = [g,\tau(\tilde{A})]$. It is also equivariant with respect to the right actions of the structure groups on the principal bundles in the sense that $\pi_\tau(x \tilde{A}) = \pi_\tau(x) \tau(\tilde{A})$ for all $x \in \Uni(2,\HH)\times_{\Uni(1,\HH)} \Mpc(6,\RR)$ and~$\tilde{A} \in \Mpc(6,\RR)$. Now, because $\operatorname{Fr}_s(\xi) \cong \Uni(2,\HH)\times_{\Uni(1,\HH)} \Sp(6,\RR)$, we can identify $\Uni(2,\HH)\times_{\Uni(1,\HH)} \Mpc(6,\RR)$ as the lift of the structure group~$\Sp(6,\RR)$ of $\operatorname{Fr}_s(\xi)$. It is known that isomorphism classes of $\Mpc(2n,\RR)$-structures over some manifold $Z$ are classified by the first sheaf cohomology group $H^1\big(Z,\underline{\Uni(1,\CC)}\big) \cong H^2(Z,\ZZ)$~\cite{forgerhess1979,robinson1989metaplectic}, where $H^2(Z,\ZZ)$ is the second \v{C}ech cohomology group and is trivial for the case $Z = S^7$. Hence, the above lift is unique up to isomorphism of $\Mpc(6,\RR)$-structures. So, we now have the isomorphism $$\mathscr{P} \cong \Uni(2,\HH)\times_{\Uni(1,\HH)} \Mpc(6,\RR)$$ and a natural interpretation of the bundles associated to the metaplectic-c structure coming from the extended bundles of the homogeneous model and therefore, the homogeneous model itself. Namely,
	$$ \mathcal{H}Z \cong \mathscr{P} \times_{\Mpc(6,\RR)} \mathcal{H} \cong \big( \Uni(2,\HH)\times_{\Uni(1,\HH)} \Mpc(6,\RR) \big) \times_{\Mpc(6,\RR)} \mathcal{H} \cong \Uni(2,\HH)\times_{\Uni(1,\HH)} \mathcal{H} \, .$$
	We denote any one of these associated bundles by $\mathcal{H}Z$ and $\Uni(2,\HH)\times_{\Uni(1,\HH)} \Mpc(6,\RR)$ by~$\mathscr{P}$. We now return to the formal connection Problem~\ref{wehaveproblems2}, which will be solved for $S^7$ by Theorem~\ref{wehavesolutions2}. 
	
	Finally, for concreteness, we need to specify what we mean by the space of smooth sections $\Gamma(\mathcal{H}Z)$ of $\mathcal{H}Z$. A general discussion is given in~\cite{symplecticdirac}. For our purposes, we assume that sections obey a Schwartz property in the fiber and are smooth along the base. More precisely, in a local trivialization, $\Psi \in \Gamma(\mathcal{H}Z)$ is given by $\psi(z,x)$, where~$z\in \RR^7$ are base coordinates and $x \in \RR^3$ are fiber coordinates, while $\psi(z,x)$ is a Schwartz function of $x$ and a smooth function of $z$.
	
	\subsubsection{Formal connections}
	From the above discussion, one might think that to prove Theorem~\ref{wehavesolutions2} all that is needed is to induce a connection on $\mathcal{H}Z$ from the Maurer--Cartan connection on $\Uni(2,\HH)$. For that, one would need a unitary~$\Uni(2,\HH)$-representation acting on $\mathcal{H}$, but  Theorem~\ref{firstembedding} only gives formal operators. Those however will suffice to prove Theorem~\ref{wehavesolutions2}.
	
	\begin{proof}[Proof of Theorem 1.3]
		We need to write down a sequence of connections $\nabla^{\hbar,\ell}$ acting on smooth sections of $\mathcal{H}Z$, whose curvature equals $\hbar^{\ell/2} F_{\ell}$, where $F_\ell$ is some smooth two-form that is polynomial in $\sqrt{\hbar}$ and takes values in $\End(\mathcal{H}Z)$. We consider $\ell \geq 0$ because the cases $\ell = -2$, and $\ell = -1$ have already been handled; see Equations~\eqref{minustwonabla} and \eqref{minusonenabla}. Once again, we trivialize bundles by choosing the two patches described in Subsection~\ref{subsec:homogeneousmodel}. Let us consider
		$$ \nabla^{\hbar,\ell} = d + \mathcal{A}^{\hbar, \ell} \, , $$
		where 
		\begin{align}
			 \mathcal{A}^{\hbar, \ell} &= i \, \alpha \left(\frac{1}{\hbar} - N - n\right) -   2 i \, \mu_{+-} (a^+_{\dot{+}} a^-_{\dot{-}} + a^-_{\dot{+}} a^+_{\dot{-}})   
			 \\&+  \left( - 2 i \, \mu_{++} a^+_{\dot{-}} \,  a^{+}_{\dot{+}} - \nu^{\dot{+}}_{+} a^{+}_{\dot{-}} \, a  - \nu^{\dot{+}}_{-}\, a^{-}_{\dot{-}} \hh a + \frac{1}{\sqrt{\hbar}}   \sum_{k=0}^{\ell+1} b_{k} \, \hbar^k \left( - 2 i \hh \kappa^{\dot{+}\dot{+}}\, a  + \nu^{\dot{+}}_{+} a^{+}_{\dot{+}} + \nu^{\dot{+}}_{-}  a^{-}_{\dot{+}} \right) \right)  \nonumber 
			 \\&+ \left( - 2 i \, \mu_{--}   a^{-}_{\dot{+}} \, a^-_{\dot{-}} + \nu^{\dot{-}}_{-} a^\dagger a^{-}_{\dot{+}} + \nu^{\dot{-}}_{+}\, a^\dagger a^{+}_{\dot{+}} + \left(  2 i \hh \kappa^{\dot{-}\dot{-}}\, a^\dagger  + \nu^{\dot{-}}_{-} a^{-}_{\dot{-}} + \nu^{\dot{-}}_{+}  a^{+}_{\dot{-}} \right) \frac{1}{\sqrt{\hbar}}   \sum_{k=0}^{\ell+1} b_{k} \, \hbar^k  \right)  \nonumber .
		\end{align} 
	The above connection form $\mathcal{A}^{\hbar, \ell}$ was computed by coupling the generators~$(\mathcal{P}^\alpha_{\dot{\alpha}})_\ell^{\hbar}$,~$ (\mathcal{K}_{\dot{\alpha}\dot{\beta}})_\ell^{\hbar}$, and $ \mathcal{J}^{\alpha\beta}$, obtained from the formal embedding of Theorem~\ref{wehavesolutions2}, to the exterior differential system of Equation~\eqref{exteriorsystem} according to Equation~\eqref{ansatzconnection}.
	Recall that $\alpha = 2 \hh \kappa^{\dot{+}\dot{-}}$ and~$b_k$ were given by $$ b_k = (-1)^k\frac{\prod_{l=0}^{k-1} \left(1 - 2l\right)}{2^k k!} \left(N + \frac{n}{2}\right)^k \, . $$
	Note that the hermitean conjugate $\dagger$ can be extended to act on differential forms:
	$$ \alpha^\dagger = \alpha \, , \,\,\,\, (\kappa^{\dot{+} \dot{+}})^\dagger = - \kappa^{\dot{-} \dot{-}} \, , \,\,\,\, (\nu^{\dot{+}}_+)^\dagger = \nu^{\dot{-}}_- \, , \quad (\nu^{\dot{+}}_-)^\dagger = - \nu^{\dot{-}}_+ \, , \,\,\,\, (\mu_{++})^\dagger = - \mu_{--} \, , \,\,\,\, (\mu_{+-})^\dagger = \mu_{+-} \, ,$$
	see Equation~\eqref{doubleindexforms}. It then follows that $(\mathcal{A}^{\hbar, \ell})^\dagger = - \mathcal{A}^{\hbar, \ell} $.
	
	We observe that the connection $\nabla^{\hbar,-2}$ on $\mathcal{H}Z$ can be globally expressed as $d + \frac{i}{\hbar} \alpha \hh 1 $ upon trivializing. Moreover, $\nabla^{\hbar,\ell'} - \nabla^{\hbar,\ell}$ is a one-form valued in $\End(\mathcal{H}Z)$ for all $\ell, \ell'$ on both patches. On the overlap, the systems of differential forms on each patch are related by a $\Uni(1,\HH)$ gauge transformation. But $\Uni(1,\HH)$ acts unitarily on $\mathcal{H}$ because $\mathcal{H}$ is a unitary representation of $\Mpc(6,\RR) \supseteq \Uni(1,\HH)$. It follows that $\nabla^{\hbar, \ell}$ indeed defines a connection on $\mathcal{H}Z$.

	We claim that the sequence of connections $\nabla^{\hbar,\ell}$ solves Problem~\ref{wehaveproblems2} for the case $Z = S^7 \cong \Uni(2,\HH) / \Uni(1,\HH)$. Indeed, Requirement \eqref{formalhermitean} of Problem~\ref{wehaveproblems2} holds because $(\mathcal{A}^{\hbar, \ell})^\dagger = -\mathcal{A}^{\hbar, \ell}$ for every $\ell$. Also, observe that $\frac{1}{\hbar}$ only appears in the coefficient of $\alpha$. Thus, after multiplying by $-i\hbar$, the global contact one-form~$\alpha$ is the only term surviving in the $\hbar \to 0$ limit. This shows that Requirement \eqref{formallimit} holds for all $\ell$. Lastly, Theorem~\ref{firstembedding} and the exterior differential system of Equation~\eqref{exteriorsystem} together guarantee that, for any $\ell$, the curvature~$(\nabla^{\hbar,\ell})^2$ is flat up to order $\ell$ (meaning it equals $\hbar^{\ell/2} F_{\ell}$ for some smooth two-form $F_\ell$ polynomial in~$\sqrt{\hbar}$).
	\end{proof}
	
	\subsection{Beyond formal quantization}\label{sec:beyondformalconnection}
	
	For distinguished discrete values of $\hbar$, Theorem~\ref{th:beyondformality} shows that the formal connection gives a quantum connection of a corresponding quantum dynamical system. In other words, Theorem~\eqref{th:beyondformality} produces quantum mechanical systems from a formal quantization.  
	
	\begin{proof}[Proof of Theorem~\ref{th:beyondformality}]
	We start by constructing the claimed vector bundle filtration. For each subspace $\mathcal{H}^{\hbar}$ of $\mathcal{H}$ appearing in the vector space filtration of Remark~\ref{filteredvectorspaces}, we can form the corresponding vector bundle \begin{align*}
		\mathcal{H}^{\hbar} Z = \Uni(2,\HH) \times_{\Uni(1,\HH)} \mathcal{H}^{\hbar}
	\end{align*} 
	associated to the homogeneous model $\Uni(2,\HH)/\Uni(1,\HH)$. The metaplectic representation restricted to $\Uni(1,\HH)$ agrees with the representation of $\Uni(1,\HH)$ obtained by exponentiating the Lie algebra representation $\rho^{\hbar}$ restricted to $\uni(1,\HH)$. One thus obtains the claimed vector bundle filtration of Theorem~\ref{th:beyondformality}:
	\begin{align*}
		\mathcal{H}^1Z \subset \mathcal{H}^{1/2}Z \subset \mathcal{H}^{1/3}Z \subset \cdots \subset \mathcal{H}Z \, .
	\end{align*} 
	
	\smallskip
	
	In general, given a homogeneous space $G/H$ and a $G$-representation $\mathbb{V}$, the associated vector bundle $G \times_H \mathbb{V}$ is a (model) tractor bundle~\cite{cap2009parabolic}. The (flat) Cartan connection $\omega$ on~$G$ defines a flat linear connection $\nabla^{\omega}$ on $G \times_H \mathbb{V}$. When $\mathbb{V}$ is a unitary representation, the pair $(G \times_H \mathbb{V},\nabla^{\omega})$ is a quantum dynamical system. In particular, equipping~$\mathcal{H}^{\hbar}Z$ with the connection $\nabla^{\hbar}$ defined by the model $\Uni(2,\HH)$ Cartan connection gives a sequence finite-dimensional quantum dynamical systems $(\mathcal{H}^{\hbar}Z,\nabla^{\hbar})_{\hbar\in \mathscr{I}}$. 
	
	The proof of Theorem~\ref{th:beyondformality} is completed upon showing that these same quantum dynamical systems are induced by the formal connection $\nabla^{[\hbar]}$ of Theorem~\ref{wehavesolutions2} on $\mathcal{H}Z$. The limit statement of Equation~\eqref{limitconnections} deals with sections of a vector bundle. Our plan is to write explicit expressions for the connections $\nabla^{\hbar}$ and $\nabla^{[\hbar]}$ in the trivialization associated to the coordinate patches of Subsection~\ref{subsec:homogeneousmodel}.
	
	For any smooth section $\Psi \in \Gamma(\mathcal{H}^{\hbar}Z)$, there is a~$\Uni(2,\HH)$-equivariant function $\hat{\Psi}$ on the extended principal bundle $\widehat{\Uni(2,\HH)} : = \Uni(2,\HH) \times_{\Uni(1,\HH)} \Uni(2,\HH)$ with values in $\mathcal{H}^{\hbar}$. Then, $\nabla^{\hbar} \Psi$ corresponds to the horizontal, equivariant form $d \hat{\Psi} + \rho^{\hbar}(\hat{\omega}) \hat{\Psi}$, where $\hat{\omega}$ is the principal connection on $\widehat{\Uni(2,\HH)}$ coming from the Maurer--Cartan form $\omega$ on $\Uni(2,\HH)$ (see Subsection~\ref{subsec:homogeneousmodel}). It follows, in the trivializations corresponding to the patches, that $$\nabla^{\hbar} \Psi = d \hh \Psi + \rho^{\hbar}(A_s) \Psi \, ,$$ where $A_s$ is the pullback of $\omega$ along the section $s$. Thus, $\nabla^{\hbar}$ is represented as $$ \nabla^{\hbar} = d + A_s = d + \kappa^{\dot{\alpha}\dot{\beta}}\, K_{\dot{\alpha} \dot{\beta}} + \nu^{\dot{\alpha}}_\alpha \, P^\alpha_{ \dot{\alpha}} + \mu_{\alpha\beta} \, J^{\alpha\beta} \, . $$
	On the other hand, the formal connection $$\nabla^{\hbar,\ell} = d + \mathcal{A}^{\hbar,\ell} = d + \kappa^{\dot{\alpha}\dot{\beta}}\, (\mathcal{K}_{\dot{\alpha}\dot{\beta}})_\ell^{\hbar} + \nu^{\dot{\alpha}}_\alpha \, (\mathcal{P}^\alpha_{ \dot{\alpha}})_\ell^{\hbar} + \mu_{\alpha\beta} \, \mathcal{J}^{\alpha\beta} \, $$
	defines an operator mapping $\Gamma(TZ) \times \Gamma(\mathcal{H}^{\hbar}Z)$ to $\Gamma(\mathcal{H}Z)$ because the operators $\rho\big((\mathcal{K}_{\dot{\alpha}\dot{\beta}})_\ell^{\hbar} \big)$ and  $\rho\big((\mathcal{P}^\alpha_{ \dot{\alpha}})_\ell^{\hbar} \big)$ are maps from $\mathcal{H}^{\hbar}$ to $\mathcal{H}$. We now need to show for all $\Psi \in \Gamma(\mathcal{H}^{\hbar})$ and $u \in \Gamma(TZ)$ that $$\nabla^{\hbar}_u \hh \Psi = \lim_{\ell \to \infty} \nabla^{\hbar, \hh \ell}_u \hh \Psi \,\,  . $$
	The above limit holds pointwise by virtue of Proposition~\ref{operatorconvergence}.
	\end{proof}
	
	\section*{Acknowledgments}
	We thank Petr Vlachopulos for collaboration during the early stages of this work. S.C. thanks Motohico Mulase for discussions. S.C. and A.W. also thank Emanuele Latini for discussions. C.G. thanks Andrew Beckett, Jos\'e Figueroa-O'Farrill, and Dieter Van den Bleeken for many useful discussions on contact geometry and homogeneous spaces, and also thanks Roger Casals for his valuable comments and conversations. A.W. was supported by Simons Foundation Collaboration Grant for Mathematicians ID 686131. 
	
\bibliographystyle{utphys}
\bibliography{draft}
\end{document}